\theoremstyle{plain}
\newtheorem{Thm}{Theorem}[section]
\newtheorem{Lem}[Thm]{Lemma}
\newtheorem{Prop}[Thm]{Proposition}
\newtheorem{Cor}[Thm]{Corollary}
\theoremstyle{definition}
\newtheorem{Def}[Thm]{Definition}
\newtheorem{example}[Thm]{Example}
\numberwithin{equation}{section}
\newcommand{\bnum}{\begin{enumerate}}
\newcommand{\enum}{\end{enumerate}}
\begin{document}

\title{Weak Nil Clean Ideal}
\author[D. K. Basnet and A. Sharma]{Dhiren Kumar Basnet* and Ajay Sharma}
\address{\noindent D. K. Basnet and A. Sharma\newline Department of Mathematical Sciences,\newline Tezpur University,\newline  Napaam-784028, Sonitpur,\newline Assam, India.}
\email{dbasnet@tezu.ernet.in, ajay123@tezu.ernet.in}

\subjclass[2010]{16N40, 16U99.}
\keywords{Clean ideals, weakly clean ideals, uniquely clean ideal, weakly uniquely clean ideal.\\
\,\,* \emph{Corresponding Author}}

%


%
%
\begin{abstract}
 As a generalization of nil clean ideal, we define weak nil clean ideal of a ring. An ideal $I$ of a ring $R$ is weak nil clean ideal if for any $x\in I$, either $x=e+n$ or $x=-e+n$, where $n$ is a nilpotent element and $e$ is an idempotent element of $R$. Some interesting properties of weak nil clean ideal and its relation with weak nil clean ring have been discussed.
\end{abstract}

\maketitle

%
%

\section{Introduction} \label{S:intro}

In this article, rings are associative with unity. The Jacobson radical, set of units, set of idempotents, set of nilpotent elements and centre of a ring $R$ are denoted by $J(R)$, $U(R)$, $Idem(R)$, $Nil(R)$ and $C(R)$ respectively. Nicholson \cite{nicholson1977lifting} defined an element $x$ of a ring $R$ to be a clean element, if $x=e+u$ for some $e\in Idem(R)$, $u\in U(R)$ and called the ring $R$ as clean ring if all its elements are clean. Diesl \cite{Diesl} defined  a ring $R$ to be nil clean ring if every element of $R$ can be written as a sum of an idempotent and a nilpotent element of $R$. Weakening the condition of clean ring, Ahn and Anderson \cite{ahn2006weakly} defined a ring $R$ to be weakly clean, if every $x\in R$ can be expressed as $x=u+e$ or $x=u-e$, where $u\in U(R)$, $e\in Idem(R)$. Also Basnet and Bhattacharyya \cite{WNCR} defined a ring $R$ to be weak nil clean, if every element $x\in R$ can be written as $x=n+e$ or $x=n-e$, where $n\in Nil(R)$ and $e\in Idem(R)$. H. Chen and M. Chen \cite{chen2003clean} defined an ideal $I$ of a ring $R$ to be clean ideal, if for any $x\in I$, $x=u+e$, for some $u\in U(R)$ and $e\in Idem(R)$. They proved that every ideal having stable range one of a regular ring is clean. Following the idea of clean ideal, Sharma and Basnet \cite{Ajay1} defined an ideal $I$ of a ring $R$ to be nil clean ideal, if for any $x\in I$, $x=n+e$, where $n\in Nil(R)$ and  $e\in Idem(R)$. They proved that for a nil clean expression of an element of a nil clean ideal of a ring $R$, the nilpotent and idempotent elements are actually elements of the ideal. Also they characterized a nil clean ring with its nil clean ideals. As a generalization of clean ideal, Sharma and Basnet \cite{Ajay2} also introduced weakly clean ideal of a ring. An ideal $I$ of a ring $R$ is said to be weakly clean ideal, if for any $x\in I$, $x=u+e$ or $x=u-e$, where $u\in U(R)$ and  $e\in Idem(R)$.

In this article we introduce the notion of weak nil clean ideal as a generalization of nil clean ideal. An ideal $I$ of a ring $R$ is called weak nil clean ideal if for each $a\in I$, either $a=e+n$ or $a=-e+n$, where $e\in Idem(R)$ and $n\in Nil(R)$. Here also we proved that for a weak nil clean expression of an element of a weak nil clean ideal of a ring $R$, the nilpotent and idempotent elements are actually elements of the ideal. Further we characterized a weak nil clean ring with its weak nil clean ideal and nil clean ideal of $R$. Also we discuss some interesting properties of weak nil clean ideals.

\section{Weak nil clean ideal}
\begin{Def}
  An ideal $I$ of a ring $R$ is called weak nil clean ideal of $R$ if for any $x\in I$, there exist $e\in Idem(R)$ and $n\in Nil(R)$ such that $x=e+n$ or $x=-e+n$. Also $I$ is called uniquely weak nil clean ideal of $R$ if for any $x\in I$ there exists a unique $e\in Idem(R)$ such that $x-e\in Nil(R)$ or $x+e\in Nil(R)$.
\end{Def}
Clear from the definition that every ideal of a weak nil clean ring is weak nil clean ideal. But there are non weak nil clean rings which contains some weak nil clean ideals, for example the ring $\mathbb{Z}_{p^n}$, where $n>1$ and $p>5$, a prime number, is not weak nil clean ring but every proper ideal of $\mathbb{Z}_{p^n}$ is weak nil clean ideal. Another such example is given below.
  \begin{example}
    Let $R_1$ be weak nil clean ring and $R_2$ be non weak nil clean ring. Then $R=R_1\oplus R_2$ is not a weak nil clean ring. But clearly $I=R_1\oplus 0$ is weak nil clean ideal of $R$.
  \end{example}
  Observe that every nil clean ideal is weak nil clean ideal but the converse is not true as $\{0,2,4\}$ is weak nil clean ideal of $\mathbb{Z}_6$ but not nil clean ideal of $\mathbb{Z}_6$.
  \begin{Lem}\label{L1}
    Every weak nil clean ideal of a ring $R$ is weakly clean ideal of $R$.
  \end{Lem}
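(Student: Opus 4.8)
The plan is to convert each weak nil clean decomposition of an element of $I$ directly into a weakly clean one, using only the elementary fact that a nilpotent perturbation of $\pm 1$ is a unit. Let $x\in I$ be arbitrary. By hypothesis $x=e+n$ or $x=-e+n$ for some $e\in Idem(R)$ and $n\in Nil(R)$. The two facts I would isolate at the outset are: (i) $1-e\in Idem(R)$ whenever $e\in Idem(R)$; and (ii) if $n\in Nil(R)$ with $n^{k}=0$, then $1+n$ and $1-n$ are units, with inverses $\sum_{i=0}^{k-1}(-n)^{i}$ and $\sum_{i=0}^{k-1}n^{i}$ respectively, so that $n-1=-(1-n)$ is a unit as well.

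With these in hand the two cases are immediate. If $x=e+n$, I would rewrite $x=(1+n)-(1-e)$, so that $x=u-f$ with $u=1+n\in U(R)$ and $f=1-e\in Idem(R)$; this is exactly a weakly clean expression of the type $x=u-e$. If instead $x=-e+n$, I would rewrite $x=(n-1)+(1-e)$, so that $x=u+f$ with $u=n-1\in U(R)$ and $f=1-e\in Idem(R)$, a weakly clean expression of the type $x=u+e$. In either case $x$ is weakly clean, and since $x\in I$ was arbitrary, $I$ is a weakly clean ideal of $R$.

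The only point that needs care, and the place where a naive argument fails, is the temptation to keep the same idempotent $e$ and simply promote the nilpotent $n$ to a unit: $n$ itself is almost never a unit (indeed $n=0$ is permitted). The resolution — the single idea the argument rests on — is to absorb a $\pm 1$ into the nilpotent summand so as to manufacture the unit $1\pm n$, compensating by replacing $e$ with the idempotent $1-e$. Because this manipulation is purely additive and requires no commutativity between $e$ and $n$, no further hypotheses enter, and I do not anticipate any real obstacle beyond recording the invertibility of $1\pm n$ from nilpotency.
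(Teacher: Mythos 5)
Your proof is correct, and in the crucial case it takes a genuinely different (and more robust) route than the paper's. For $x=-e+n$ the two arguments coincide: you write $x=(n-1)+(1-e)$, the paper writes $x=(1-e)+(-1+n)$, and $-(1-n)$ is a unit by the geometric-series argument. The difference is the case $x=e+n$. The paper keeps the idempotent with a plus sign, $x=(1-e)+(2e-1+n)$, and asserts $2e-1+n\in U(R)$; you instead exploit the $u-e$ option in the definition of weakly clean and write $x=(1+n)-(1-e)$, which needs only the always-valid fact that $1+n\in U(R)$. This is a real improvement, because the paper's assertion is unjustified and is false in general noncommutative rings: a unit plus a nilpotent need not be a unit when the two do not commute. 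Concretely, in $M_2(\mathbb{Q})$ take $e=\left(\begin{smallmatrix}1&0\\0&0\end{smallmatrix}\right)$ and $n=\left(\begin{smallmatrix}-1/2&1/2\\-1/2&1/2\end{smallmatrix}\right)$; then $e^2=e$, $n^2=0$, yet $2e-1+n=\left(\begin{smallmatrix}1/2&1/2\\-1/2&-1/2\end{smallmatrix}\right)$ is singular. (This does not contradict the lemma itself --- that ring has no nonzero weak nil clean ideal, by a trace argument --- but it does break the paper's element-wise step; salvaging it would require something extra, such as commutativity of $e$ and $n$ as in the paper's Theorem 2.7, or nilpotency of $2$ as in Diesl's proof that nil clean rings are clean.) Your sign flip on the idempotent is exactly what the ``weakly'' in weakly clean buys, it requires no commutativity whatsoever, and it yields a complete proof where the paper's own argument has a gap.
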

  \begin{proof}
    Let $I$ be a weak nil clean ideal of $R$. For $x\in I$, either $x=e+n$ or $x=-e+n$, where $e\in Idem(R)$ and $n\in Nil(R)$. If $x=e+n$ then $x=(1-e)+(2e-1+n)$ and if $x=-e+n$ then $x=(1-e)+(-1+n)$, where $1-e \in Idem(R)$ and $-1+n$, $2e-1+n\in U(R)$.
  \end{proof}
  The converse of Lemma \ref{L1} is not true as ideal $\{0,3,6,9,12\}$ of $\mathbb{Z}_{15}$ is weakly clean ideal but not weak nil clean ideal of $\mathbb{Z}_{15}$.
  \begin{Prop}\label{PPP1}
  If $I$ is a weak nil clean ideal of a ring $R$ then $I\cap J(R)$ is a nil ideal of $R$.
\end{Prop}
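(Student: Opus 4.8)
The plan is to show that every element of $I\cap J(R)$ is nilpotent; since the intersection of two ideals is again an ideal, this is precisely what is needed to conclude that $I\cap J(R)$ is a nil ideal. So I would fix an arbitrary $x\in I\cap J(R)$ and aim to prove $x\in Nil(R)$.

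Because $x\in I$ and $I$ is weak nil clean, I can write $x=e+n$ or $x=-e+n$ with $e\in Idem(R)$ and $n\in Nil(R)$. The key idea is to pass to the quotient $R/J(R)$, writing $\bar{y}$ for the image of $y\in R$. Since $x\in J(R)$, its image vanishes, i.e. $\bar{x}=0$. Applying the bar map to the weak nil clean expression then gives $\bar{e}=-\bar{n}$ in the first case and $\bar{e}=\bar{n}$ in the second, so in either case $\bar{e}=\pm\bar{n}$. This is nilpotent in $R/J(R)$, because $n$ is nilpotent in $R$ and negation preserves nilpotency.

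Now $\bar{e}$ is simultaneously idempotent and nilpotent, and any element that is both must be zero (from $\bar e=\bar e^2=\cdots=\bar e^k=0$); hence $\bar{e}=0$, that is, $e\in J(R)$. Since $J(R)$ contains no nonzero idempotent, this forces $e=0$. Substituting back into $x=\pm e+n$ yields $x=n\in Nil(R)$, which is what I wanted.

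I expect the only genuinely delicate point to be the reduction step: one must notice that passing to $R/J(R)$ converts the hypothesis $x\in J(R)$ into $\bar{x}=0$, thereby tying $\bar{e}$ to the nilpotent $\bar{n}$. Once that link is made, the two standard facts — an idempotent that is also nilpotent is zero, and $J(R)$ has no nonzero idempotent — finish the argument. The case split ($x=e+n$ versus $x=-e+n$) is handled uniformly, so the extra $\pm$ coming from weakness causes no real trouble here.
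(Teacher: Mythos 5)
Your proof is correct, but it follows a noticeably different route from the paper's. The paper works entirely inside $R$ and splits into two cases: for $x=e+n$ it simply cites Proposition 2.4 of the authors' earlier paper on nil clean ideals, and for $x=-e+n$ it expands $(x+e)^k=n^k=0$ noncommutatively, observing that every term of the expansion except $e^k$ contains a factor of $x$, so that $e=e^k\in J(R)$; it then gets $e=0$ from $1-e\in Idem(R)\cap U(R)=\{1\}$. You instead pass to the quotient $R/J(R)$, where $\bar{x}=0$ immediately forces $\bar{e}=\pm\bar{n}$ to be an idempotent that is also nilpotent, hence zero, so $e\in J(R)$, and then $e=0$ because $J(R)$ contains no nonzero idempotent. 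The substance is the same --- both arguments reduce to showing $e\in J(R)$ and then killing the idempotent --- but your quotient argument treats the two signs uniformly, is self-contained (no appeal to the external Proposition 2.4), and replaces the noncommutative binomial bookkeeping with the trivial facts that ring homomorphisms preserve idempotents and nilpotents. What the paper's in-$R$ computation buys is only the avoidance of forming a quotient; your version is arguably the cleaner one, and it isolates the real content: any weak nil clean element lying in $J(R)$ must equal its nilpotent part.
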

\begin{proof}
  Let $x\in I\cap J(R)$, so either $x=e+n$ or $x=-e+n$, where $e\in Idem(R)$ and $n\in Nil(R)$. If $x=e+n$, then by Proposition $2.4$ \cite{Ajay1}, $x=n$. If $x=-e+n$ then there exists $k\in \mathbb{N}$ such that $n^k=0$. Now $n^k=(x+e)^k=\sum_{t,r\in R}^{finite}txr +e^k=0\Rightarrow e=-\sum_{t,r\in R}^{finite}txr\in J(R)$ as $x\in J(R)$. So $1-e\in Idem(R)\cap U(R)=\{1\}$, hence $1-e=1\Rightarrow e=0\Rightarrow x=n$. Thus the result follows.
\end{proof}
\begin{Cor}
  If $R$ is a weak nil clean ring then $J(R)\subseteq N(R)$. In particular for is a commutative ring $R$, $J(R)=N(R)$.
\end{Cor}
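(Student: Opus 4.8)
The plan is to recognize that the hypothesis makes $R$ itself an instance of the object studied in Proposition \ref{PPP1}. Indeed, if $R$ is a weak nil clean ring, then by definition every element $x\in R$ can be written as $x=e+n$ or $x=-e+n$ with $e\in Idem(R)$ and $n\in Nil(R)$; but this is exactly the statement that the improper ideal $I=R$ is a weak nil clean ideal of $R$. So I would begin by making this reduction explicit.

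With $I=R$ in hand, I would simply invoke Proposition \ref{PPP1}, which tells us that $I\cap J(R)$ is a nil ideal of $R$. Since $I=R$, we have $I\cap J(R)=R\cap J(R)=J(R)$, and therefore $J(R)$ is a nil ideal. As every element of a nil ideal is nilpotent, this yields $J(R)\subseteq N(R)$, establishing the first assertion.

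For the ``in particular'' clause it remains to prove the reverse inclusion $N(R)\subseteq J(R)$ under the assumption that $R$ is commutative. Here I would use the standard characterization of the Jacobson radical: $a\in J(R)$ if and only if $1-ra\in U(R)$ for every $r\in R$. Given $n\in N(R)$ with $n^k=0$, commutativity ensures $(rn)^k=r^kn^k=0$, so $rn$ is nilpotent, and the finite geometric sum $1+rn+\cdots+(rn)^{k-1}$ is a two-sided inverse of $1-rn$. Hence $1-rn\in U(R)$ for all $r$, giving $n\in J(R)$ and so $N(R)\subseteq J(R)$. Combining with the first part yields $J(R)=N(R)$.

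I do not anticipate any serious obstacle: the argument is a direct application of Proposition \ref{PPP1} together with the elementary fact that nilpotent elements lie in the Jacobson radical of a commutative ring. The only points requiring a little care are the reduction in the first step, namely observing that a weak nil clean ring is precisely a ring in which the improper ideal $R$ is a weak nil clean ideal, and the use of commutativity in the second inclusion to guarantee that $rn$ is nilpotent so that the geometric series terminates.
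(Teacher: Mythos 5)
Your proposal is correct and follows exactly the route the paper intends: the corollary is stated as an immediate consequence of Proposition \ref{PPP1}, obtained by taking $I=R$ (a weak nil clean ring is precisely one whose improper ideal is weak nil clean), so that $J(R)=R\cap J(R)$ is nil. Your filling in of the reverse inclusion $N(R)\subseteq J(R)$ for commutative $R$ via the geometric-series unit argument is the standard fact the paper leaves unstated, and it is done correctly.
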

Let $R$ be a ring. An element $a\in R$ is called weakly clean element of type-I if $a=e+u$ and called weakly clean element of type-II if $a=-e+u$, where $e\in Idem(R)$ and $u\in U(R)$.
\begin{Def}
An ideal $I$ of a ring $R$ is said to be strongly weak nil clean ideal if for any $x\in I$, there exist $e\in Idem(R)$ and $n\in Nil(R)$ such that $x=e+n$ or $x=-e+n$ and $en=ne$. Also $I$ is called strongly weakly clean ideal if for any $x\in I$, there exist $e\in Idem(R)$ and $u\in U(R)$ such that $x=e+u$ or $x=-e+u$ and $eu=ue$.
\end{Def}
\begin{Thm}
   Let $I$ be an ideal of $R$ then
   \begin{enumerate}
   \item If $I$ is strongly weak nil clean ideal then it is strongly weakly clean ideal and $a-a^2$ or $a+a^2$ is nilpotent.
   \item If $I$ is strongly weakly clean ideal and $a-a^2$ or $a+a^2$ is nilpotent provided $a$ is of type-I or type-II weakly clean element of $R$ respectively, then $I$ is strongly weak nil clean ideal.
   \end{enumerate}
 \end{Thm}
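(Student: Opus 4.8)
The plan is to prove the two implications separately, and in each implication to split into the two cases according to whether the element is written with $+e$ or $-e$. The thread running through every step is the commutativity hypothesis ($en=ne$ in the nil clean expressions, $eu=ue$ in the weakly clean ones): it is exactly what lets me move nilpotents past idempotents and units and conclude that certain products and sums are again nilpotent or units. I would therefore record commutativity at each manipulation.

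For part (1), I would start from a strongly weak nil clean element $a\in I$. If $a=e+n$ with $en=ne$, I mimic the computation in Lemma \ref{L1} and write $a=(1-e)+(2e-1+n)$, where $1-e\in Idem(R)$, $2e-1$ is a unit with $(2e-1)^2=1$, and $2e-1+n=(2e-1)\bigl(1+(2e-1)n\bigr)\in U(R)$ because $(2e-1)n$ is nilpotent (here commutativity is essential, via $((2e-1)n)^k=(2e-1)^kn^k$); since $1-e$ commutes with $e$ and $n$, this is a strongly weakly clean expression of type-I. If $a=-e+n$, I write $a=(1-e)+(-1+n)$ with $-1+n\in U(R)$, again commuting with $1-e$. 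For the nilpotency clause I compute, using $en=ne$, that $a-a^2=n(1-2e-n)$ in the first case and $a+a^2=n(1-2e+n)$ in the second; both are nilpotent because $n$ commutes with the bracketed factor.

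For part (2), I would run the computation in reverse. Given $a\in I$ with a strongly weakly clean expression, suppose first $a$ is type-I, say $a=e+u$ with $eu=ue$; by hypothesis $a-a^2$ is nilpotent. Expanding gives $a-a^2=u(1-2e-u)$, and multiplying by $u^{-1}$ (which commutes with $e$ and $u$) shows $m:=1-2e-u\in Nil(R)$. Rearranging yields $a=(1-e)+(-m)$, a strongly weak nil clean expression of the form $f+n'$, since $1-e\in Idem(R)$, $-m\in Nil(R)$, and both are polynomials in $e,u$ and hence commute. For the type-II case $a=-e+u$ with $a+a^2$ nilpotent, the identical computation gives $1-2e+u\in Nil(R)$ and then $a=-(1-e)+(1-2e+u)$, a strongly weak nil clean expression of the form $-f+n'$; one checks directly that $-(1-e)+(1-2e+u)=-e+u=a$.

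The main obstacle I expect is the bookkeeping of nilpotency and of signs rather than any deep idea: each time a nilpotent is passed across a unit or an idempotent I must invoke the commutativity hypothesis to justify it, and I must pair type-I with $a-a^2$ and type-II with $a+a^2$ so that the $+e$ versus $-e$ in the final expression comes out correctly. Verifying the two rearrangement identities and the nilpotency of the $u(\cdots)$ factors is then routine once commutativity is in hand.
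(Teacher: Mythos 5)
Your proposal is essentially the paper's own proof: part (2) is identical (your extra step of multiplying $a-a^2=u(1-2e-u)$ by $u^{-1}$, using commutativity, to extract nilpotency of $1-2e-u$ is a justification the paper omits but should have), and part (1) in the case $a=e+n$ matches the paper line for line, including the identity $a-a^2=(1-2e-n)n$. The one place you diverge is part (1) in the case $a=-e+n$: you use the type-I expression $a=(1-e)+(-1+n)$, whereas the paper uses the type-II expression $a=-(1-e)+(1-2e+n)$. Both are legitimate strongly weakly clean expressions, so your argument does prove the literal wording of (1); but the paper's choice is the right one for the theorem's internal coherence, because part (2) consumes the correspondence ``type-I $\leftrightarrow$ $a-a^2$ nilpotent, type-II $\leftrightarrow$ $a+a^2$ nilpotent,'' and your decomposition breaks it: you hand the element $a=-e+n$ a type-I expression while proving only that $a+a^2$ is nilpotent. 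The mismatch is not cosmetic, since a type-I expression of such an element genuinely need not have $a-a^2$ nilpotent: in $R=I=\mathbb{Z}_6$ (which is strongly weak nil clean), take $e=1$, $n=0$, so $a=-1=(1-1)+(-1)$ is type-I, yet $a-a^2=-2=4$ is idempotent, not nilpotent, while $a+a^2=0$. So with your version of (1), parts (1) and (2) no longer fit together as mutual converses. The repair is one line: in that case write $a=-(1-e)+(1-2e+n)$, noting $1-2e+n$ is a unit because $(1-2e)^2=1$ and $n$ commutes with $1-2e$, so $1-2e+n=(1-2e)\bigl(1+(1-2e)n\bigr)$ is a unit times a unit.
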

 \begin{proof}
 \begin{enumerate}
 \item Let $I$ be a strongly weak nil clean ideal and $a\in R$. Then either $a=e+n$ or $a=-e+n$, where $e\in Idem(R)$, $n\in Nil(R)$ and $en=ne$. If $a=e+n$ then $a=(1-e)+(2e-1+n)$ is strongly weakly clean decomposition of $a$ and also $a-a^2=(1-2e-n)n$ is nilpotent. If $a=-e+n$, then $a=-(1-e)+(1-2e+n)$ is strongly weakly clean decomposition of $a$ and also $a+a^2=(1-2e+n)n$ is nilpotent.
 \item Let $a\in I$, so either $a=e+u$ or $a=-e+u$, where $e\in Idem(R)$ and $u\in U(R)$. If $a=e+u$, then $a-a^2$ is nilpotent, which implies $1-2e-u$ is nilpotent and $a=(1-e)+(-1+2e+u)$, a strongly weak nil clean expression of $a$. If $a=-e+u$, then $a+a^2$ is nilpotent, which implies $1-2e+u$ is nilpotent and $a=-(1-e)+(1-2e+u)$, a strongly weak nil clean expression of $a$.
 \end{enumerate}
 \end{proof}
 \begin{Lem}
  Every idempotent in a uniquely weak nil clean ideal is a central idempotent.
\end{Lem}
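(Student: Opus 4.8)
The plan is to take an arbitrary idempotent $e$ lying in the uniquely weak nil clean ideal $I$ and to show that it commutes with every $r\in R$. Following the classical criterion for centrality of an idempotent, it suffices to establish the two identities $er(1-e)=0$ and $(1-e)re=0$ for all $r\in R$, since together they give $er=ere=re$. So the whole proof reduces to forcing these two ``off-diagonal'' products to vanish, and the uniqueness hypothesis is exactly the tool for that.

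First I would fix $r\in R$ and set $n=er(1-e)$. A direct computation gives $n^2=er(1-e)er(1-e)=0$, using $(1-e)e=0$, so $n\in Nil(R)$; moreover $en=n$ and $ne=0$. Because $e\in I$ and $I$ is an ideal, $n=er-ere\in I$, and hence $x:=e+n\in I$. The crucial observation is that $x$ is itself an idempotent: from $en=n$, $ne=0$ and $n^2=0$ one checks $x^2=e^2+en+ne+n^2=e+n=x$. Thus both $e$ and $x$ are idempotents of $R$ sitting over the same element $x$ of $I$.

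Now I invoke unique weak nil cleanness for this element $x\in I$. For each of the two idempotents the difference with $x$ is nilpotent, namely $x-e=n\in Nil(R)$ and $x-x=0\in Nil(R)$. Hence $e$ and $x$ are two idempotents satisfying the defining condition for $x$, so the uniqueness clause forces $e=x$, i.e.\ $n=0$ and therefore $er(1-e)=0$. Running the identical argument with $m=(1-e)re$ in place of $n$ (here $m^2=0$, $me=m$, $em=0$, and $e+m$ is again an idempotent of $I$ lying over the element $e+m$) yields $(1-e)re=0$. Combining the two identities gives $er=ere=re$ for all $r\in R$, so $e$ is central.

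I expect the only delicate point to be the bookkeeping around the disjunction in the definition of a uniquely weak nil clean ideal: one must check that both witnesses $e$ and $x$ are compared under the same clause, here the clause $x-(\cdot)\in Nil(R)$ rather than $x+(\cdot)\in Nil(R)$, so that the uniqueness of the idempotent genuinely applies. The idempotency verification for $e+er(1-e)$, although it is the heart of the construction, is a short routine computation and should present no real difficulty.
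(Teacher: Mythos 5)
Your proof is correct and takes essentially the same approach as the paper: both hinge on the observation that $e+er(1-e)$ is an idempotent differing from $e$ by the square-zero element $er(1-e)$, so the uniqueness hypothesis forces $er(1-e)=0$, and symmetrically $(1-e)re=0$, giving $er=ere=re$. The only cosmetic difference is that the paper applies uniqueness to the element $e$ itself, decomposed as $(e+ex(1-e))+(-ex(1-e))$ and as $e+0$, whereas you apply it to the element $e+er(1-e)$; the two idempotent witnesses being compared are identical.
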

\begin{proof}
  Let $I$ be a uniquely weak nil clean ideal of a ring $R$ and $e$ be any idempotent in $I$. For any $x\in R$, since $e=(e+ex(1-e))+(-ex(1-e))=e+0$, so $e+ex(1-e)=e\Rightarrow ex=exe$, as $e+ex(1-e)\in Idem(R)$. Similarly we can show that $xe=exe$. Hence $xe=ex$.
\end{proof}
The following theorem shows that, for a weak nil clean expression of an element of a weak nil clean ideal of a ring $R$, the nilpotent and idempotent elements are actually elements of the ideal.
 \begin{Thm}\label{111}
   An ideal $I$ of a ring $R$ is weak nil clean ideal \textit{if and only if} for any $x\in I$, either $x=e+n$ or $x=-e+n$, where $e\in Idem(I)$ and $n\in Nil(I)$.
 \end{Thm}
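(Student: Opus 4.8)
The plan is to treat the two directions separately, with the forward (``only if'') implication carrying essentially all of the content. The backward direction is immediate: if every $x\in I$ admits an expression $x=e+n$ or $x=-e+n$ with $e\in Idem(I)\subseteq Idem(R)$ and $n\in Nil(I)\subseteq Nil(R)$, then $I$ satisfies the defining condition of a weak nil clean ideal verbatim. So the work lies entirely in upgrading a generic weak nil clean expression (with $e,n$ a priori only in $R$) to one whose idempotent and nilpotent parts lie in $I$.

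For the forward direction I would fix $x\in I$ together with a weak nil clean expression $x=e+n$ or $x=-e+n$, where $e\in Idem(R)$ and $n\in Nil(R)$, and choose $k\in\mathbb{N}$ with $n^k=0$. The key observation is that an idempotent equals every one of its powers, so $e=e^k$, while $e$ itself can be written as a $k$-th power of something built from $x$ and $n$. Concretely, in the case $x=e+n$ we have $e=x-n$, hence $e=e^k=(x-n)^k$; in the case $x=-e+n$ we have $e=n-x$, hence $e=e^k=(n-x)^k$. In either case I have expressed the idempotent as a single $k$-th power.

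Next I would expand this $k$-th power as a sum of $2^k$ monomials, each of which is a length-$k$ word (with signs) in the two letters $x$ and $n$. The unique monomial containing no occurrence of $x$ is $(\pm n)^k=\pm n^k=0$ by the choice of $k$, and so it drops out. Every remaining monomial contains at least one factor equal to $x\in I$; since $I$ is a two-sided ideal it absorbs multiplication on both sides, so each such monomial lies in $I$. Therefore $e\in I$, and consequently $n=x-e$ (in the first case) or $n=x+e$ (in the second case) also lies in $I$. This yields $e\in Idem(I)$ and $n\in Nil(I)$, as required.

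The single point that demands care, and the only place the argument could break, is that $x$ and $n$ need not commute, so one may not invoke the binomial theorem to gather terms. This is precisely why I phrase the expansion as a sum over noncommutative words and argue monomial-by-monomial, using nothing beyond the ideal-absorption property of $I$; no commutativity hypothesis enters. I would remark that the $x=e+n$ computation simply recovers the nil clean ideal result of \cite{Ajay1}, while the $x=-e+n$ computation is its exact analogue for the negative idempotent, so the theorem follows uniformly in both types.
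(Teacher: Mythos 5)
Your proposal is correct and follows essentially the same route as the paper: write $e=e^k=(\pm(x-n))^k$, expand the power noncommutatively so that the unique pure-nilpotent monomial vanishes and every remaining monomial lands in $I$ by two-sided absorption, then recover $n=x\mp e\in I$. The only difference is presentational — you spell out the $x=-e+n$ case and the noncommutativity caveat explicitly, where the paper compresses these into a ``similarly.''
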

 \begin{proof}
   Let $I$ be a weak nil clean ideal of $R$ and $x\in I$. There exist $n\in Nil(R)$ and $e\in Idem(R)$ such that either $x=e+n$ or $x=-e+n$. So $n^k=0$, for some $k\in \mathbb{N}$. If $x=e+n$, then $(x-n)^k=(-1)^kn^k+\sum_{i=1}^{s} q_ixp_i$, for some $p_i, q_i\in R$, $(x-n)^k=\sum_{i=1}^{s} q_ixp_i\in I$, so $e^k=e\in I$. Similarly if $x=-e+n$, then also we get $e\in I$. Hence $n\in I$, as required.
 \end{proof}
 The following corollary is immediate.
  \begin{Cor}
   If $R$ is a local ring, then every proper weak nil clean ideal of $R$ is nil ideal. In fact if $R$ has no non trivial idempotents, then every proper weak nil clean ideal of $R$ is nil ideal.
 \end{Cor}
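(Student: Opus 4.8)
The plan is to prove the stronger ``in fact'' statement first and then recover the local case as a special instance. The engine of the argument is Theorem \ref{111}, which tells us that in a weak nil clean expression $x=e+n$ or $x=-e+n$ of an element $x\in I$, the idempotent $e$ actually lies in $I$ (and likewise $n\in Nil(I)$). This is precisely the feature that lets a hypothesis on the idempotents of $R$ bear directly on the ideal.

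First I would assume $R$ has no non-trivial idempotents, so that $Idem(R)=\{0,1\}$, and let $I$ be a proper weak nil clean ideal. Fixing an arbitrary $x\in I$ and applying Theorem \ref{111}, I write $x=e+n$ or $x=-e+n$ with $e\in Idem(I)$ and $n\in Nil(I)$. Since $e$ is an idempotent of $R$ that lies in $I$, we have $e\in\{0,1\}$. The case $e=1$ would force $1\in I$, hence $I=R$, contradicting that $I$ is proper; therefore $e=0$. In either expression this yields $x=n\in Nil(I)\subseteq Nil(R)$, so every element of $I$ is nilpotent and $I$ is a nil ideal.

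Finally, for the local case I would invoke the standard fact that a local ring has only the trivial idempotents $0$ and $1$: if $e=e^2$ with $e\neq 0,1$, then $e(1-e)=0$ with neither factor zero, so neither $e$ nor $1-e$ is a unit, forcing both into the unique maximal ideal and hence $1=e+(1-e)$ into it, a contradiction. Thus a local ring satisfies the hypothesis of the preceding paragraph, and every proper weak nil clean ideal of it is nil. I do not anticipate a genuine obstacle; the single point demanding care is that the idempotent be pinned inside $I$, and this is exactly the content of Theorem \ref{111} which makes the corollary immediate.
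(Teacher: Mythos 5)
Your proposal is correct and is essentially the paper's own argument: the paper states this corollary as ``immediate'' from Theorem \ref{111}, and your proof --- using that theorem to pin the idempotent $e$ inside $I$, ruling out $e=1$ by properness of $I$, and handling the local case via the standard fact that local rings have only trivial idempotents --- is exactly that immediate argument written out in full.
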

  In the following Theorem, we characterize weak nil clean ring $R$ by weak nil clean ideal and nil clean ideal of $R$.
 \begin{Thm}\label{main}
     $R$ is a weak nil clean ring \textit{if and only if} there exists a central idempotent $e$ in $R$ such that ideals generated by $e$ and $1-e$ are both weak nil clean ideals of $R$ and one of them is nil clean ideal of $R$.
 \end{Thm}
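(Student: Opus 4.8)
The plan is to handle the two implications separately, with essentially all of the content in the ``if'' direction. For the ``only if'' direction I would take the trivial central idempotent $e=1$: then $\langle e\rangle=R$ is a weak nil clean ideal of $R$ (being the whole weak nil clean ring), while $\langle 1-e\rangle=\langle 0\rangle=\{0\}$ is a nil clean ideal, since $0=0+0$ is a valid nil clean expression. This already satisfies the stated condition. If a genuine (nontrivial) splitting is preferred instead, one would invoke the structure theorem for weak nil clean rings from \cite{WNCR} to write $R\cong A\times B$ with $A$ nil clean and $B$ weak nil clean, and take $e$ corresponding to the identity of $A$.

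For the converse, suppose such a central idempotent $e$ exists, and assume without loss of generality (interchanging $e$ with $1-e$) that $\langle e\rangle$ is the nil clean ideal. Since $e$ is a central idempotent, $R$ splits as a ring direct product $R\cong eR\times(1-e)R$ via $r\mapsto(er,(1-e)r)$, where $\langle e\rangle=eR$ and $\langle 1-e\rangle=(1-e)R$ are rings with identities $e$ and $1-e$. The first step is to upgrade the ideal hypotheses to ring statements about the two corners. For $x\in eR$, a nil clean expression $x=f+n$ in $R$ yields $x=ex=(ef)+(en)$, and because $e$ is central, $ef$ is an idempotent and $en$ a nilpotent, both lying in $eR$; hence $eR$ is a nil clean ring. (Alternatively one cites the result of \cite{Ajay1} that the parts already lie in the ideal.) The identical multiplication-by-$(1-e)$ argument, together with Theorem \ref{111}, shows $(1-e)R$ is a weak nil clean ring. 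Thus the problem reduces to the ring-theoretic claim: if $S$ is nil clean and $T$ is weak nil clean, then $S\times T$ is weak nil clean.

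To prove this claim, take $(s,t)\in S\times T$, write $s=f_1+n_1$ in $S$, and choose a weak nil clean expression of $t$ in $T$. If $t=f_2+n_2$, then $(s,t)=(f_1,f_2)+(n_1,n_2)$ is a type-I weak nil clean expression and we are finished. The genuine difficulty is the case $t=-f_2+n_2$: to combine both coordinates under one sign I must also produce a \emph{type-II} expression $s=-g_1+m_1$ for the nil clean element $s$. This is the crux, and it rests on the fact that $2$ is nilpotent in any nil clean ring; I would record this as a short lemma (writing $-1=e+n$ and using $e=-1-n$ gives $2=-n(n+3)$, which is nilpotent since $e$ commutes with $n$). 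Consequently $2S$ is a two-sided nil ideal, as $2$ is central and nilpotent. I would then rewrite $s=-f_1+(2f_1+n_1)$ and verify that $2f_1+n_1$ is nilpotent: modulo $2S$ it equals $n_1$, so some power of it lies in the nil ideal $2S$ and is therefore nilpotent, which forces $2f_1+n_1$ itself to be nilpotent. With $s=-f_1+(2f_1+n_1)$ and $t=-f_2+n_2$ in hand, $(s,t)=-(f_1,f_2)+(2f_1+n_1,n_2)$ is a type-II weak nil clean expression, completing the argument. The main obstacle is precisely this sign-matching step, and specifically the non-commutative verification that $2f_1+n_1$ is nilpotent, which is why passing to the quotient $S/2S$ — rather than naively summing two nilpotents — is the right device.
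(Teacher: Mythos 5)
Your proof is correct, and its skeleton (split $R$ along the central idempotent, decompose each component, recombine with matching signs) is the same as the paper's; the essential difference lies at the step you correctly identify as the crux. The paper, having assumed $\langle 1-e\rangle$ is the nil clean ideal, handles the case $a=-f_1+n_1$ by simply writing ``we set $b=-f_2+n_2$'' --- that is, it tacitly assumes every element of a nil clean ideal also admits a type-II expression $-(\mathrm{idempotent})+(\mathrm{nilpotent})$. That assumption is true but is nowhere justified in the paper, and it is exactly your sign-matching problem. Your proposal supplies the missing argument: in a nil clean ring $S$ the element $2$ is nilpotent (from $-1=e+n$ one gets $2=-n(n+3)$, a product of commuting elements one of which is nilpotent), hence $2S$ is a nil ideal since $2$ is central, and any $s=f_1+n_1$ rewrites as $s=-f_1+(2f_1+n_1)$, where $2f_1+n_1$ is nilpotent because its image modulo the nil ideal $2S$ equals $\overline{n_1}$, so a power of it lies in $2S$ and is therefore nilpotent. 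So your route costs one extra lemma about $2$, but it buys a proof whose recombination step actually closes, whereas the paper's does not without this (or an equivalent) justification. Your preliminary reduction of the ideal hypotheses to ring statements about the corners $eR$ and $(1-e)R$ (by multiplying expressions by the central idempotent, or by citing Theorem \ref{111}) is also sound and matches how the paper uses idempotents and nilpotents drawn from $\langle e\rangle$ and $\langle 1-e\rangle$.
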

 \begin{proof}
   If $R$ is weak nil clean ring, then $e=1$ works. Conversely, without loss of generality assume that $<e>$ is weak nil clean ideal and $<1-e>$ is nil clean ideal of $R$. For $x\in R$, since $R=<e>+<1-e>$, so $x=a+b$, where $a\in <e>$ and $b\in <1-e>$. There exist $f_1\in Idem(<e>)$ and $n_1\in Nil(<e>)$, such that either $a=f_1+n_1$ or $a=-f_1+n_1$. If $a=f_1+n_1$, then we set $b=f_2+n_2$, where $f_2\in Idem(<1-e>)$ and $n_2\in Nil(<1-e>)$, then $x=(f_1+f_2)+(n_1+n_2)$ is a nil clean expression of $x$ in $R$. Also if $a=-f_1+n_1$ then we set $b=-f_2+n_2$, where $f_2\in Idem(<1-e>)$ and $n_2\in Nil(<1-e>)$, so $x=-(f_1+f_2)+(n_1+n_2)$ is a weak nil clean expression of $x$ in $R$. Hence $R$ is a weak nil clean ring.
 \end{proof}
 A finite orthogonal set of idempotents $e_1, \cdot\cdot\cdot , e_n$ in a ring $R$, is said to be complete set if $e_1+ \cdot\cdot\cdot +e_n=1$. Now we generalize the above result in terms of complete set of idempotents.
  \begin{Thm}
 Ring $R$ is weak nil clean \textit{if and only if} there exists a complete set of central idempotents $e_1, \cdot\cdot\cdot , e_n$ in $R$, such that ideal generated by $e_i$ is weak nil clean ideal of $R$ for all $i$ and at most one $<e_i>$ is not nil clean ideal.
 \end{Thm}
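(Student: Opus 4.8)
The plan is to mimic the proof of Theorem~\ref{main}, upgrading the two-idempotent splitting $\{e,1-e\}$ to an arbitrary complete orthogonal set. The forward implication is immediate: if $R$ is weak nil clean, take $n=1$ and $e_1=1$, so that $\langle e_1\rangle=R$ is a weak nil clean ideal and the condition ``at most one $\langle e_i\rangle$ is not nil clean'' is vacuously satisfied, the single corner being permitted to fail nil cleanness.

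For the converse, relabel so that $\langle e_2\rangle,\dots,\langle e_n\rangle$ are nil clean ideals while $\langle e_1\rangle$ is the (at most one) corner that is merely weak nil clean. Since the $e_i$ are central, orthogonal and sum to $1$, every $x\in R$ decomposes uniquely as $x=\sum_{i=1}^n x_i$ with $x_i=e_ix\in\langle e_i\rangle$, and the corners $\langle e_i\rangle=e_iR$ are pairwise orthogonal ideals. First I would record two structural facts. By Theorem~\ref{111} (and its nil clean analogue from \cite{Ajay1}), every weak nil clean, respectively nil clean, expression of $x_i$ can be taken with idempotent and nilpotent parts lying inside $\langle e_i\rangle$. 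Moreover, since $e_ie_j=0$ for $i\neq j$, any idempotents $f_i\in\langle e_i\rangle$ are mutually orthogonal, whence $\sum f_i$ is again idempotent; and any nilpotents $n_i\in\langle e_i\rangle$ satisfy $n_in_j=0$ for $i\neq j$, so every mixed monomial dies and $\big(\sum n_i\big)^k=\sum n_i^{k}$, making $\sum n_i$ nilpotent.

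The key device, implicit already in Theorem~\ref{main}, is that a nil clean ideal furnishes \emph{both} signs of representation: for $i\geq2$, applying nil cleanness of $\langle e_i\rangle$ to the element $-x_i\in\langle e_i\rangle$ yields $-x_i=g_i+p_i$, hence $x_i=-g_i+(-p_i)$ is a ``$-$idempotent plus nilpotent'' expression alongside the usual $x_i=f_i+n_i$. Now I split on the sign coming from the distinguished corner $\langle e_1\rangle$. If $x_1=f_1+n_1$, choose the $+$ expression $x_i=f_i+n_i$ in every other corner; then $x=\big(\sum f_i\big)+\big(\sum n_i\big)$ is a genuine nil clean expression. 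If instead $x_1=-f_1+n_1$, choose the $-$ expression $x_i=-g_i+(-p_i)$ in every other corner; then $x=-\big(f_1+\sum_{i\geq2}g_i\big)+\big(n_1+\sum_{i\geq2}(-p_i)\big)$, a weak nil clean expression. In either case the bracketed parts are a legitimate idempotent and nilpotent by the orthogonality remarks, so $R$ is weak nil clean.

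The only genuine obstacle is the sign bookkeeping in the second case: one must guarantee that \emph{each} nil clean corner can match the negative sign dictated by $\langle e_1\rangle$, which is exactly why the two-sided representation fact is needed. If two corners were merely weak nil clean rather than nil clean, their signs could be forced incompatibly, one demanding $+$ and the other $-$, while $\langle e_1\rangle$ insists on its own sign, and no single global $\pm$ idempotent would survive. This is precisely the scenario excluded by the hypothesis ``at most one $\langle e_i\rangle$ is not nil clean,'' so the argument closes.
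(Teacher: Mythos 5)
Your proof is correct and takes essentially the same route as the paper, whose own argument is just ``decompose $x$ along $R=\langle e_1\rangle+\cdots+\langle e_n\rangle$ and repeat the sign-matching argument of Theorem~\ref{main}.'' You additionally make explicit two points the paper leaves implicit, namely the orthogonality facts guaranteeing that $\sum_i f_i$ is idempotent and $\sum_i n_i$ is nilpotent, and the fact that a nil clean ideal supplies both signs of representation (apply nil cleanness to $-x_i$), which is exactly what justifies the paper's unexplained step ``we set $b=-f_2+n_2$.''
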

 \begin{proof}
   $(\Rightarrow)$ Taking $e=1$.\\
   $(\Leftarrow)$ Clearly $<e_1>+<e_2>+\cdot\cdot\cdot+<e_n>=R$, so similar to the proof of Theorem \ref{main}, we can show that $R$ is weak nil clean ring.
 \end{proof}
\begin{Prop}
   Let $I$ be an ideal of a ring $R$. Then the following are equivalent:
   \begin{enumerate}
     \item $I$ is weak nil clean ideal of $R$.
     \item There exists a complete set of central idempotents $e_1, \cdot\cdot\cdot , e_n$ such that $e_iI$ is a weak nil clean ideal of $e_iR$, for all $i$ and at most one $e_iI$ is not nil clean ideal of $e_iR$.
   \end{enumerate}
 \end{Prop}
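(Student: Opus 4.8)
The plan is to mirror the proof of Theorem \ref{main}, exploiting that a complete set of central idempotents splits both $R$ and $I$ into a direct sum. The implication $(1)\Rightarrow(2)$ is immediate: taking $n=1$ and $e_1=1$ gives $e_1R=R$ and $e_1I=I$, so the single summand $e_1I=I$ is a weak nil clean ideal of $e_1R=R$ by hypothesis, and the condition that at most one summand fail to be nil clean is automatically met.

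For $(2)\Rightarrow(1)$ I would first record the structural facts. Since $e_1,\ldots,e_n$ is a complete set of orthogonal central idempotents, $R=\bigoplus_i e_iR$ and $I=\bigoplus_i e_iI$, and each $x\in I$ decomposes uniquely as $x=\sum_i e_ix$ with $e_ix\in e_iI$. Relabel so that $e_1I,\ldots,e_{n-1}I$ are nil clean ideals of $e_1R,\ldots,e_{n-1}R$ respectively, while $e_nI$ is the weak nil clean ideal of $e_nR$ that need not be nil clean. Using Theorem \ref{111} together with its nil clean counterpart from \cite{Ajay1}, the idempotent and nilpotent parts of every decomposition can be taken inside the relevant summand $e_iI\subseteq e_iR$. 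Hence idempotents coming from different summands are orthogonal (since $f_if_j=f_ie_ie_jf_j=0$ for $i\ne j$), and nilpotents from different summands multiply to zero. Consequently a sum of idempotents, one from each summand, is again an idempotent, and a sum of such nilpotents is again nilpotent.

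Now I would fix $x\in I$ and treat the two possible forms of the weak nil clean decomposition of the last coordinate $e_nx$. If $e_nx=f_n+m_n$, write each $e_ix=f_i+m_i$ for $1\le i\le n-1$ in nil clean form and add, obtaining $x=\bigl(\sum_i f_i\bigr)+\bigl(\sum_i m_i\bigr)$, which is a nil clean, hence weak nil clean, expression. If instead $e_nx=-f_n+m_n$, the nil clean summands supply only $+$-expressions directly, so I would apply nil cleanness to the negatives: for $1\le i\le n-1$ write $-(e_ix)=g_i+p_i$, whence $e_ix=-g_i-p_i$, and add to get $x=-\bigl(\sum_{i<n}g_i+f_n\bigr)+\bigl(m_n-\sum_{i<n}p_i\bigr)$, a weak nil clean expression of $x$.

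The only genuine subtlety, and the step I expect to be the main obstacle, is this last case: reconciling the $-$-form forced by the weak nil clean summand $e_nI$ with the nil clean summands, which natively yield only $+$-forms. The device that resolves it is that an ideal is closed under negation, so one may feed $-(e_ix)$ into the nil clean decomposition and then read off a $-$-form for $e_ix$ itself; after that, the orthogonality bookkeeping from the second paragraph guarantees that the assembled idempotent and nilpotent parts are genuinely an idempotent and a nilpotent of $R$, completing the proof.
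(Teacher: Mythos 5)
Your proof is correct and follows essentially the same route as the paper's: split $x$ along the complete set of central idempotents, decompose the weak nil clean coordinate first, and force the matching sign in the nil clean coordinates. The only differences are matters of detail rather than approach — the paper reduces to the case $n=2$ while you treat general $n$ directly, and you make explicit two points the paper leaves implicit, namely the orthogonality bookkeeping showing that sums of coordinatewise idempotents and nilpotents are again idempotent and nilpotent, and the justification that a nil clean ideal yields a $-$-form expression by decomposing $-(e_ix)$.
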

 \begin{proof}
   (1)$ \Rightarrow $ (2) Taking $e=1$.\\
   (2)$\Rightarrow $(1) Let $e_1, \cdot\cdot\cdot , e_n$ be a complete set of idempotents in $R$ such that $e_iI$ is a weak nil clean ideal of $e_iR$, for all $i$ and at most one $e_iI$ is not nil clean ideal of $e_iR$. It is enough to show the result for $n=2$. Clearly $I=e_1I\oplus e_2I$. Without loss of generality assume $e_1I$ is a nil clean ideal of $e_1R$. Let $x\in I$, then $x=a+b$, where $a\in e_1I$ and $b\in e_2I$, so there exist $f_2\in Idem(e_2I)$ and $m_2\in Nil(e_2I)$ such that either $b=f_2+m_2$ or $b=-f_2+m_2$. If $b=f_2+m_2$, then we set $a=f_1+m_1$, where $f_1\in Idem(e_1I)$ and $m_1\in Nil(e_1I)$ and we get $x=(f_1+f_2)+(m_1+m_2)$ is a weak nil clean expression of $x$. If $b=-f_2+m_2$, then we set $a=-f_1+m_1$, where $f_1\in Idem(e_1I)$ and $m_1\in Nil(e_1I)$ and we get $x=-(f_1+f_2)+(m_1+m_2)$ is a weak nil clean expression of $x$.
    \end{proof}

 \begin{Thm}
   Let $R$ be a ring and $I_1$ be an ideal containing the nil ideal $I$. Then $I_1$ is weak nil clean ideal of $R$ \textit{if and only if} $I_1/I$ is weak nil clean ideal of $R/I$.
 \end{Thm}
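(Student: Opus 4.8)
The plan is to work directly with the natural projection $\pi : R \to R/I$, $\pi(r) = r+I =: \bar r$, and prove each implication separately. The forward direction is essentially formal, while the converse is where the hypothesis that $I$ is \emph{nil} does all the work, through two classical facts: idempotents lift modulo a nil ideal, and an element that is nilpotent modulo a nil ideal is already nilpotent.

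For the forward implication, suppose $I_1$ is weak nil clean and take $\bar x \in I_1/I$ with $x \in I_1$. Writing $x = e+n$ or $x = -e+n$ with $e \in Idem(R)$, $n \in Nil(R)$ (indeed $e \in Idem(I_1)$, $n \in Nil(I_1)$ by Theorem \ref{111}) and applying $\pi$, I get $\bar x = \bar e + \bar n$ or $\bar x = -\bar e + \bar n$. Since $e^2 = e$ forces $\bar e^2 = \bar e$, and $n^k = 0$ forces $\bar n^k = \bar 0$, the summands $\bar e$ and $\bar n$ lie in $Idem(R/I)$ and $Nil(R/I)$ respectively, so $I_1/I$ is weak nil clean.

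For the converse, take $x \in I_1$, so $\bar x \in I_1/I$ satisfies $\bar x = \bar g + \bar m$ or $\bar x = -\bar g + \bar m$ with $\bar g \in Idem(R/I)$ and $\bar m \in Nil(R/I)$. Choose a representative $a \in I_1$ of $\bar g$. The key move is to lift $\bar g$: because $I$ is nil, idempotents lift modulo $I$, so there exists $e \in Idem(R)$ with $e - a \in I$; as $a \in I_1$ and $I \subseteq I_1$, it follows that $e \in I_1$, and $\bar e = \bar g$. Now set $y = x - e$ in the first case and $y = x + e$ in the second. In either case $\bar y = \bar m$ is nilpotent in $R/I$, so $y^k \in I$ for some $k$; since $I$ is nil, $y^k$ is nilpotent and hence $y$ itself is nilpotent in $R$. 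Thus $x = e + y$ or $x = -e + y$ is a genuine weak nil clean expression in $R$, proving that $I_1$ is weak nil clean.

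The main obstacle is this backward direction, and it is concentrated in the lifting of the idempotent $\bar g$ across $I$ together with the passage from "$y$ nilpotent modulo $I$" to "$y$ nilpotent in $R$". Both steps break down without nilness of $I$, but with it they are standard, and the only remaining care is bookkeeping: matching the sign $\pm$ of the chosen expression and checking $e \in I_1$, which is immediate once $e \equiv a \pmod I$ with $a \in I_1 \supseteq I$. After that the two cases are symmetric and require no further computation.
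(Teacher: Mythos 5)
Your proof is correct and takes essentially the same route as the paper's: the forward direction is the formal image under the projection $R\to R/I$, and the converse lifts the idempotent modulo the nil ideal $I$ and then uses the fact that an element nilpotent modulo a nil ideal is itself nilpotent. The only difference is cosmetic: you additionally verify that the lifted idempotent lies in $I_1$, which the definition of weak nil clean ideal does not require (and which in any case is guaranteed by Theorem \ref{111}).
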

 \begin{proof}
   If $I_1$ is weak nil clean ideal of $R$, then clearly $I_1/I$ is weak nil clean ideal of $R/I$. Conversely, let $I_1/I$ be weak nil clean ideal of $R/I$ and $x\in I_1$. Then either $\overline{x}=\overline{e}+\overline{n}$ or $\overline{x}=-\overline{e}+\overline{n}$, where $\overline{e}\in Idem(I_1/I)$ and $\overline{n}\in Nil(I_1/I)$. Since idempotents can be lifted modulo nil ideal, so lift $\overline{e}$ to $e\in I_1$. Then $x-e$ or $x+e$ is nilpotent in $I_1$, modulo $I$ and hence $x-e$ or $x+e$ is nilpotent in $I_1$.
 \end{proof}

  \begin{Thm}\label{HOMIMAGE}
   Every homomorphic image of weak nil clean ideal of a ring is also weak nil clean ideal.
 \end{Thm}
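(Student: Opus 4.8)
The plan is to let $\phi\colon R\to S$ be a surjective ring homomorphism, so that the ``homomorphic image'' is the ideal $\phi(I)$ of $S$, and to show directly from the definition that $\phi(I)$ is a weak nil clean ideal of $S$. The two facts I would lean on are entirely standard and do not even require surjectivity: any ring homomorphism carries idempotents to idempotents, since $e^2=e$ gives $\phi(e)^2=\phi(e^2)=\phi(e)$, and carries nilpotents to nilpotents, since $n^k=0$ gives $\phi(n)^k=\phi(n^k)=\phi(0)=0$. I would also use linearity in the trivial form $\phi(-e)=-\phi(e)$.

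First I would take an arbitrary $y\in\phi(I)$ and write $y=\phi(x)$ for some $x\in I$. Since $I$ is a weak nil clean ideal of $R$, there exist $e\in Idem(R)$ and $n\in Nil(R)$ with either $x=e+n$ or $x=-e+n$. Applying $\phi$ and using that $\phi$ is additive, the first case yields $y=\phi(e)+\phi(n)$ and the second yields $y=-\phi(e)+\phi(n)$. By the preservation facts above, $\phi(e)\in Idem(S)$ and $\phi(n)\in Nil(S)$, so in either case $y$ has the required decomposition, and since $y$ was arbitrary, $\phi(I)$ is a weak nil clean ideal of $S$.

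The one point that genuinely needs care—really the only ``obstacle'' in an otherwise routine argument—is the ambient ring in which $\phi(I)$ is supposed to be an ideal: the image of an ideal is an ideal only of the image ring, not of an arbitrary codomain. So I would either phrase the theorem for a surjective $\phi$, or simply replace the codomain by $\phi(R)$ and verify that $\phi(I)$ is an ideal of $\phi(R)$, which is immediate because $\phi$ is onto $\phi(R)$. One could optionally strengthen the conclusion by invoking Theorem \ref{111} to note that the idempotent and nilpotent parts already lie in $I$, so their images lie in $\phi(I)$; this is not needed for the statement but makes the decomposition internal to $\phi(I)$. Beyond this bookkeeping the proof is a direct transport of the defining decomposition through $\phi$.
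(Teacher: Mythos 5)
Your proof is correct. The paper actually states this theorem with no proof at all, treating it as immediate; your argument---transporting the decomposition $x=\pm e+n$ through $\phi$, using that ring homomorphisms preserve idempotents and nilpotents, and taking care that $\phi(I)$ is an ideal of the image ring $\phi(R)$ rather than of an arbitrary codomain---is exactly the routine verification the authors left implicit, and your attention to the surjectivity issue is a point the paper glosses over entirely.
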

 \begin{Thm}\label{Th1}
  Let $\{R_i\}_{i=1}^m$ be a family of rings and $I_i's$ are ideals of $R_i$, then the ideal $I=\prod_{i=1}^{m} I_i$ of $R=\prod_{i=1}^{m} R_i$ is weak nil clean ideal \textit{if and only if} each $I_i$ is weak nil clean ideal of $R_i$ and at most one $I_i$ is not nil clean ideal.
\end{Thm}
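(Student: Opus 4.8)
The plan is to reduce everything to a componentwise analysis, keeping in mind that an element $x=(x_1,\dots,x_m)$ of $R$ admits a representation $x=e+n$ (resp. $x=-e+n$) with $e\in\idem(R)$ and $n\in\nil(R)$ \emph{if and only if} each coordinate $x_i$ admits a representation of the \emph{same} sign in $R_i$, since the idempotents and nilpotents in a finite product are exactly the tuples of idempotents and of nilpotents. Thus the whole difficulty is that weak nil cleanness of $I$ forces a single global choice of sign that must work simultaneously in every coordinate. The technical heart of the argument will be the following observation, valid in any weak nil clean ideal: $a$ has a representation $a=-f+m$ \emph{if and only if} $-a$ has the opposite-sign representation $-a=f+(-m)$, because $-m\in\nil(R)$ whenever $m\in\nil(R)$. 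Consequently a \emph{nil clean} ideal (in which every element is of the form $e+n$) automatically has the property that every element is \emph{also} of the form $-e+n$: given $a$, the element $-a$ is of the form $e+n$, so $a$ is of the form $-e+n$.

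For the ($\Leftarrow$) direction I would assume, after relabelling, that $I_2,\dots,I_m$ are nil clean and $I_1$ is weak nil clean, and take an arbitrary $x=(x_1,\dots,x_m)\in I$. Since $I_1$ is weak nil clean, $x_1$ is either of the form $e_1+n_1$ or of the form $-e_1+n_1$. In the first case I use nil cleanness of the remaining factors to write each $x_i=e_i+n_i$ for $i\ge 2$ and assemble $x=(e_1,\dots,e_m)+(n_1,\dots,n_m)$; in the second case I invoke the observation above to write each $x_i=-e_i+n_i$ for $i\ge 2$ (this is exactly where the nil clean factors must be able to follow the minus sign) and assemble $x=-(e_1,\dots,e_m)+(n_1,\dots,n_m)$. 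In either case the tuple of $e_i$ is idempotent and the tuple of $n_i$ is nilpotent, so $x$ is weak nil clean.

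For the ($\Rightarrow$) direction, each $I_i$ is the image of $I$ under the projection $R\to R_i$, a surjective ring homomorphism carrying $I$ onto $I_i$, so $I_i$ is weak nil clean by Theorem \ref{HOMIMAGE}. To see that at most one factor fails to be nil clean I would argue by contradiction: if, say, $I_1$ and $I_2$ are both weak nil clean but not nil clean, I pick $a_1\in I_1$ and $a_2\in I_2$ having no representation of the form $e+n$. Then $a_1$ and $a_2$ have representations of the form $-e+n$, and by the observation $b_1:=-a_1\in I_1$ has a representation of the form $e+n$ but none of the form $-e+n$. The element $x=(b_1,a_2,0,\dots,0)\in I$ then has no global representation of either sign: the $+$ sign fails in the second coordinate (as $a_2$ has no $e+n$ form) and the $-$ sign fails in the first (as $b_1$ has no $-e+n$ form), contradicting weak nil cleanness of $I$.

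The main obstacle is the second case of the reverse direction, namely guaranteeing that when $x_1$ is forced into the minus representation the nil clean factors can be matched with a minus representation too; this is precisely what the negation observation supplies, and it is the same observation that powers the contradiction in the forward direction, so establishing it cleanly at the outset is the crucial step. The remaining bookkeeping—verifying that tuples of idempotents (resp. nilpotents) are idempotent (resp. nilpotent), and that $0$ admits both signs so the padding coordinates cause no trouble—I would dispatch in a line or two.
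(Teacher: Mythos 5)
Your proof is correct and takes essentially the same route as the paper's: projections give weak nil cleanness of each factor, a two-coordinate element with mismatched signs rules out having two non-nil-clean factors, and coordinatewise sign-matching proves the converse. The only real difference is that you state explicitly the negation observation ($a=-e+n$ if and only if $-a=e+(-n)$), which the paper uses implicitly both when it asserts the existence of an element of the form $n+e$ but not of the form $n-e$ in a non-nil-clean factor, and when it lets the nil clean factors follow the minus sign in the converse direction.
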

\begin{proof}
$(\Rightarrow)$  Let $I$ be weak nil clean ideal of $R$. Then being homomorphic image of $I$ each $I_{\alpha}$ is weak nil clean ideal of $R_{\alpha}$. Suppose $I_{\alpha_1}$ and $I_{\alpha_2}$ are not nil clean ideals, where ${\alpha}_1\neq {\alpha}_2$. Since $I_{\alpha_1}$ is not nil clean ideal, so not all elements $x\in I_{\alpha_1}$ are of the form $x=n-e$, where $n\in Nil(R_{\alpha_1})$ and $e\in Idem(R_{\alpha_1})$. As $I_{\alpha_1}$ is weak nil clean ideal of $R_{\alpha_1}$, so there exists $x_{\alpha_1}\in I_{\alpha_1}$ with $x_{\alpha_1}=n_{\alpha_1}+e_{\alpha_1}$, where $n_{\alpha_1}\in Nil(R_{\alpha_1})$ and $e_{\alpha_1}\in Idem(R_{\alpha_1})$, but $x_{\alpha_1}\neq n-e$, for any $n\in Nil(R_{\alpha_1})$ and $e\in Idem(R_{\alpha_1})$. Similarly there exists $x_{\alpha_2}\in I_{\alpha_2}$ with $x_{\alpha_2}=n_{\alpha_2}-e_{\alpha_2}$, where $n_{\alpha_2}\in Nil(R_{\alpha_2})$ and $e_{\alpha_2}\in Idem(R_{\alpha_2})$, but $x_{\alpha_2}\neq n+e$, for any $n\in Nil(R_{\alpha_2})$ and $e\in Idem(R_{\alpha_2})$. Define $x=(x_\alpha)\in I$ by \begin{align*}
    x_\alpha &=x_{\alpha}\,\, \,\,\,\,\,\,\,if\,\, \alpha \in \{\alpha_1,\alpha_2\} \\
             &=0 \,\,\,\,\,\,\,\,\,\,\,\,\,if \,\,\alpha \notin \{\alpha_1,\alpha_2\}
  \end{align*}
  Then clearly $x\neq n\pm e$, for any $n\in Nil(R)$ and $e\in Idem(R)$. Hence at most one $I_\alpha$ is not nil clean ideal.\par
  $(\Leftarrow)$ If each $I_\alpha$ is nil clean ideal of $R_{\alpha}$ then $I=\prod I_\alpha$ is nil clean ideal of $R$ by Theorem $2.20$ \cite{Ajay1} and hence weak nil clean ideal of $R$. Assume $I_{\alpha_0}$ is weak nil clean ideal but not nil clean ideal of $R_{\alpha_0}$ and that all other $I_{\alpha}$'s are nil clean ideals of $R_{\alpha}$. If $x=(x_\alpha) \in I$, then in $I_{\alpha_0}$, we can write $x_{\alpha_0}=n_{\alpha_0}+e_{\alpha_0}$ or $x_{\alpha_0}=n_{\alpha_0}-e_{\alpha_0}$, where $n_{\alpha_0}\in Nil(R_{\alpha_0})$ and $e_{\alpha_0}\in Idem(R_{\alpha_0})$. If $x_{\alpha_0}=n_{\alpha_0}+e_{\alpha_0}$, then for $\alpha \neq \alpha_0$ we set, $x_{\alpha}=n_{\alpha}+e_{\alpha}$, where $n_{\alpha}\in Nil(I_\alpha)$ and $e_{\alpha}\in Idem(I_\alpha)$. If $x_{\alpha_0}=n_{\alpha_0}-e_{\alpha_0}$, then for $\alpha \neq \alpha_0$ we set, $x_{\alpha}=n_{\alpha}-e_{\alpha}$, where $n_{\alpha}\in Nil(I_\alpha)$ and $e_{\alpha}\in Idem(I_\alpha)$; then $n=(n_\alpha)\in Nil(R)$ and $e=(e_\alpha)\in Idem(R)$, such that $x=n+e$ or $x=n-e$ and hence $I$ is weak nil clean ideal of $R$.
\end{proof}
 If the collection of rings is infinite then Theorem \ref{Th1} is not true as shown by the following example.
 \begin{example}
   Consider the ring $R=\mathbb{Z}_3\times \mathbb{Z}_{2^2}\times \mathbb{Z}_{2^3}\times \cdot\cdot\cdot $, clearly for any $n\in \mathbb{N}$, $\mathbb{Z}_{2^n}$ is weak nil clean ring and hence ideal generated by $2$ in $\mathbb{Z}_{2^n}$ say $<2>_n$ is also weak nil clean ideal of $\mathbb{Z}_{2^n}$. But the ideal $I=<3>\times <2>_2\times <2>_3\times \cdot\cdot\cdot$ is not weak nil clean ideal of $R$ as $(3,2,2,\cdot\cdot\cdot)\in I$ can not be written as a sum of an idempotent and a nilpotent element of $R$.
 \end{example}
 Next we study the relationship between weak nil clean ideal of a given ring $R$ and weak nil clean ideal of upper triangular matrix ring $\mathbb{T}_n(R)$. Here given a matrix $X$, $X_{ij}$ denotes the $(i,j)^{th}$ entry of $X$.
  \begin{Lem}\label{D211}
   For $E, N\in \mathbb{T}_n(R)$ the following hold:
   \begin{enumerate}
     \item If $E^2=E$, then $(E_{ii})^2=E_{ii}$ for $1\leq i\leq n$.
     \item $N$ is nilpotent if and only if $N_{ii}$ is nilpotent for $1\leq i\leq n$.
   \end{enumerate}
 \end{Lem}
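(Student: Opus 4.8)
The plan is to reduce both parts to a single structural observation: the \emph{diagonal map}
$\phi\colon \mathbb{T}_n(R)\to R\times\cdots\times R$ (with $n$ factors), defined by $\phi(X)=(X_{11},\dots,X_{nn})$, is a ring homomorphism. First I would verify this. For upper triangular $X,Y$ one has $(XY)_{ii}=\sum_{k} X_{ik}Y_{ki}$; since $X_{ik}=0$ whenever $k<i$ and $Y_{ki}=0$ whenever $k>i$, the only surviving term is $k=i$, so $(XY)_{ii}=X_{ii}Y_{ii}$. As $\phi$ clearly respects addition and sends the identity to $(1,\dots,1)$, it is a ring homomorphism, and its kernel is the ideal $S$ consisting of the strictly upper triangular matrices.

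Part (1) is then immediate. If $E^2=E$, applying the computation above gives $(E^2)_{ii}=(E_{ii})^2$, so $(E_{ii})^2=E_{ii}$ for each $i$; equivalently, $\phi(E)$ is an idempotent of $R\times\cdots\times R$, which forces each coordinate $E_{ii}$ to be idempotent. Likewise, the forward direction of part (2) is quick: if $N^m=0$, then $\phi(N)^m=\phi(N^m)=0$, whence $(N_{ii})^m=0$ and each $N_{ii}$ is nilpotent.

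The converse of part (2) is the main obstacle, because the diagonal part and the strictly upper triangular part of $N$ need not commute, so one cannot simply add nilpotency indices. I would handle it by passing to the quotient by $S$. The key input is that $S$ is a nilpotent ideal with $S^n=0$: a product of $m$ strictly upper triangular matrices has nonzero entries only in positions $(i,j)$ with $j-i\ge m$, and for $m=n$ no such position exists. Now suppose each $N_{ii}$ is nilpotent, say $(N_{ii})^{k_i}=0$, and put $M=\max_i k_i$. Then $\phi(N)^M=0$, that is $\phi(N^M)=0$, so $N^M\in\ker\phi=S$. Consequently $(N^M)^n$ lies in $S^n=0$, giving $N^{Mn}=0$, and $N$ is nilpotent.

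In short, the only nontrivial ingredient is the nilpotency of the ideal of strictly upper triangular matrices, which is exactly what lets one promote ``all diagonal entries nilpotent'' to ``the whole matrix nilpotent''; everything else follows formally from $\phi$ being a ring homomorphism.
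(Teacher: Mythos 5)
Your proof is correct, and it is worth noting that the paper itself offers no argument at all for this lemma: it simply cites Lemma 2.1.1 of Diesl's Ph.D.\ thesis. So your proposal is genuinely different in the only meaningful sense available — it is a complete, self-contained replacement for an external reference. Your two ingredients are sound: the diagonal map $\phi\colon \mathbb{T}_n(R)\to R\times\cdots\times R$, $\phi(X)=(X_{11},\dots,X_{nn})$, is indeed a ring homomorphism (the computation $(XY)_{ii}=X_{ii}Y_{ii}$ is exactly right, since the cross terms vanish by triangularity), which disposes of part (1) and the forward half of part (2) formally; and the kernel $S$ of $\phi$, the strictly upper triangular matrices, satisfies $S^n=0$ because a product of $m$ strictly upper triangular matrices can be nonzero only in positions $(i,j)$ with $j-i\ge m$. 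Your handling of the converse of (2) correctly sidesteps the one real trap — the diagonal and strictly upper triangular parts of $N$ need not commute, so one cannot naively expand $(D+S)^k$ — by instead taking $M=\max_i k_i$, observing $\phi(N^M)=\phi(N)^M=0$, hence $N^M\in S$, hence $N^{Mn}\in S^n=0$. This argument buys the reader something the paper does not provide: the lemma becomes verifiable on the spot, with the bound $Mn$ on the nilpotency index made explicit, rather than resting on a citation to an unpublished thesis.
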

\begin{proof}
  See Lemma $2.1.1$ \cite{Diesl1}.
\end{proof}
 \begin{Thm}
 Let $T$ be a $2\times2$ upper triangular matrix ring over $R$. Then an ideal $S=\left(
                                                                 \begin{array}{cc}
                                                                   I & R \\
                                                                   0 & J \\
                                                                 \end{array}
                                                               \right)$ of $T$ is weak nil clean ideal \textit{if and only if} $I$ and $J$ are weak nil clean ideals of $R$ and one of them is nil clean ideal.
\end{Thm}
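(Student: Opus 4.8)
The plan is to prove both directions by passing between a decomposition of a matrix in $T=\mathbb{T}_2(R)$ and the decompositions of its diagonal entries, using Lemma \ref{D211}. The essential tools are the two corner maps $\pi_1,\pi_2\colon T\to R$ sending $\left(\begin{smallmatrix} a & r\\ 0 & b\end{smallmatrix}\right)$ to $a$ and to $b$; each is a surjective ring homomorphism with $\pi_1(S)=I$ and $\pi_2(S)=J$. Throughout I call an expression $x=n+e$ (with $n\in Nil(R)$, $e\in Idem(R)$) a positive expression and $x=n-e$ a negative expression, and I use repeatedly that a weak nil clean ideal is nil clean exactly when each of its elements has a positive expression, which, by closure of an ideal under negation, is the same as each element having a negative expression.

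For the forward implication, suppose $S$ is weak nil clean. Since $I=\pi_1(S)$ and $J=\pi_2(S)$ are homomorphic images of the weak nil clean ideal $S$, Theorem \ref{HOMIMAGE} shows that $I$ and $J$ are weak nil clean ideals of $R$. To see that one of them is nil clean I argue as in the proof of Theorem \ref{Th1}. If neither were nil clean, then there would be $a_0\in I$ with no positive expression and $b_0\in J$ with no negative expression. Put $X=\left(\begin{smallmatrix} a_0 & 0\\ 0 & b_0\end{smallmatrix}\right)\in S$. If $X=N+E$ with $N\in Nil(T)$, $E\in Idem(T)$, then by Lemma \ref{D211} we have $E_{11}\in Idem(R)$ and $N_{11}\in Nil(R)$, so $a_0=N_{11}+E_{11}$ is a positive expression, a contradiction; hence $X$ has no positive expression. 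Likewise $X=N-E$ would give the negative expression $b_0=N_{22}-E_{22}$, again impossible; hence $X$ has no negative expression. This contradicts that $S$ is weak nil clean, so at least one of $I,J$ is nil clean.

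For the converse, let $I,J$ be weak nil clean and, without loss of generality, let $J$ be nil clean; the case that $I$ is nil clean is symmetric because the off-diagonal entry never interferes with the argument below. Take $X=\left(\begin{smallmatrix} a & r\\ 0 & b\end{smallmatrix}\right)\in S$ and a weak nil clean expression $a=n_1+\varepsilon e_1$ with $\varepsilon\in\{+1,-1\}$, $e_1\in Idem(R)$, $n_1\in Nil(R)$. Since $J$ is nil clean it has an expression of the matching sign, $b=n_2+\varepsilon e_2$ (for $\varepsilon=-1$ apply the negative-expression form of nil cleanness). Set $E=\left(\begin{smallmatrix} e_1 & 0\\ 0 & e_2\end{smallmatrix}\right)\in Idem(T)$ and $N=X-\varepsilon E=\left(\begin{smallmatrix} n_1 & r\\ 0 & n_2\end{smallmatrix}\right)$. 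The diagonal entries of $N$ are nilpotent, so $N\in Nil(T)$ by Lemma \ref{D211}, and $X=N+\varepsilon E$ is a weak nil clean expression. Hence $S$ is weak nil clean.

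I expect the forward implication's ``one of them is nil clean'' to be the main obstacle. The delicate point is to convert the plain failure of nil cleanness into a single diagonal entry that kills the positive sign and, simultaneously in the other diagonal slot, an entry that kills the negative sign, so that no global sign choice for $E$ can succeed. This depends on the negation symmetry that, for a weak nil clean ideal, ``all elements positive'' and ``all elements negative'' coincide; granting that, Lemma \ref{D211} is exactly what transfers these coordinatewise obstructions to genuine matrix decompositions and back, and the remainder is bookkeeping.
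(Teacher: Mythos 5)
Your proposal is correct and takes essentially the same approach as the paper: both directions hinge on Lemma \ref{D211} to pass between a matrix decomposition and decompositions of its diagonal entries, the converse assembles a sign-matched diagonal idempotent using the negation symmetry of the nil clean ideal, and the forward direction builds a diagonal matrix from one element lacking a positive expression and one lacking a negative expression to contradict weak nil cleanness of $S$. The differences are cosmetic only: you take $J$ rather than $I$ as the nil clean ideal, you cite Theorem \ref{HOMIMAGE} via the corner projections where the paper says ``clearly,'' and you keep the obstructions phrased with components in $R$ (avoiding any appeal to Theorem \ref{111}).
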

\begin{proof}
  Without loss of generality, assume that $I$ and $J$ are respectively nil clean and weak nil clean ideals of $R$. Let $x=\left(
                                                                                                                    \begin{array}{cc}
                                                                                                                      a & r \\
                                                                                                                      0 & b \\
                                                                                                                    \end{array}
                                                                                                                  \right)\in \left(
                                                                                                                               \begin{array}{cc}
                                                                                                                                 I & R \\
                                                                                                                                 0 & J \\
                                                                                                                               \end{array}
                                                                                                                             \right)
  $. So either $b=f+n_1$ or $b=-f+n_1$, where $f\in Idem(J)$ and $n_1\in Nil(J)$. If $b=f+n_1$, set $a=e+n$, where $e\in Idem(I)$ and $n\in Nil(I)$. Then $x=\left(
                                                                                                                                   \begin{array}{cc}
                                                                                                                                     e & 0 \\
                                                                                                                                     0 & f \\
                                                                                                                                   \end{array}
                                                                                                                                 \right)+\left(
                                                                                                                                           \begin{array}{cc}
                                                                                                                                             n & r \\
                                                                                                                                             0 & n_1 \\
                                                                                                                                           \end{array}
                                                                                                                                         \right)
  $, where $\left(
                                                                                                                                   \begin{array}{cc}
                                                                                                                                     e & 0 \\
                                                                                                                                     0 & f \\
                                                                                                                                   \end{array}
                                                                                                                                 \right)\in Idem(T)$
  and $\left(
         \begin{array}{cc}
           n & r \\
           0 & n_1 \\
         \end{array}
       \right)\in Nil(T)
  $. If $b=-f+n_1$, set $a=-e+n$, where $e\in Idem(I)$ and $n\in Nil(I)$. Then $x=-\left(
                                                                                                                                   \begin{array}{cc}
                                                                                                                                     e & 0 \\
                                                                                                                                     0 & f \\
                                                                                                                                   \end{array}
                                                                                                                                 \right)+\left(
                                                                                                                                           \begin{array}{cc}
                                                                                                                                             n & r \\
                                                                                                                                             0 & n_1 \\
                                                                                                                                           \end{array}
                                                                                                                                         \right)
  $, where $\left(
                                                                                                                                           \begin{array}{cc}
                                                                                                                                             n & r \\
                                                                                                                                             0 & n_1 \\
                                                                                                                                           \end{array}
                                                                                                                                         \right)\in Nil(T)$
  and $\left(
                                                                                                                                   \begin{array}{cc}
                                                                                                                                     e & 0 \\
                                                                                                                                     0 & f \\
                                                                                                                                   \end{array}
                                                                                                                                 \right)\in Idem(T)$ by
  Lemma \ref{D211}. \\
  For the converse, clearly $I$ and $J$ are weak nil clean ideals of $R$. Suppose both are not nil clean ideals of $R$. As $I$ is not weak nil clean ideal of $R$, so there exists $x\in I$ such that $x=e_1+n_1$, where $e_1\in Idem(I)$ and $n_1\in Nil(I)$ but $x\neq n-e$, for all $n\in Nil(I)$ and $e\in Idem(I)$. Similarly there exists $y\in J$ such that $y=-e_2+n_2$, where $e_2\in Idem(J)$ and $n_2\in Nil(J)$ but $y\neq n+e$, for all $n\in Nil(J)$ and $e\in Idem(J)$. Then it is easy to see that $\left(
                                          \begin{array}{cc}
                                            x & 0 \\
                                            0 & y \\
                                          \end{array}
                                        \right)
  $ is not weak nil clean element of $T$.
\end{proof}
Let $R$ be a commutative ring and $M$ be a $R$-module. Then the idealization of $R$ and $M$ is the ring $R(M)$ with underlying set $R\times M$ under coordinatewise addition and multiplication given by $(r,m)(r',m')=(rr', rm'+r'm)$, for all $r, r'\in R$ and $m, m' \in M$. It is obvious that if $I$ is an ideal of $R$ then for any submodule $N$ of $M$, $I(N)=\{(r,n)\, : \,r\in I \,\, ,\, \,n\in N \}$ is an ideal of $R(M)$. First we mention basic existing results about idempotents and nilpotent elements in $R(M)$ and study the nil clean ideals of the idealization $R(M)$ of $R$ and $R$-module $M$.
\begin{Lem}\label{RM}
  Let $R$ be a commutative ring and $R(M)$ be the idealization of $R$ and $R$-module $M$. Then the following hold:
  \begin{enumerate}
    \item  $(r,m) \in Idem(R(M))$ if and only if $r \in Idem(R)$ and $m =0$.
    \item  $(r,m) \in Nil(R(M))$ if and only if $r \in Nil(R)$.
  \end{enumerate}
\end{Lem}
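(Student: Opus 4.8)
The plan is to reduce both statements to a single closed-form expression for powers in $R(M)$. First I would record, by an easy induction on $k$, the identity
\[
(r,m)^k = \bigl(r^k,\, k\,r^{k-1}m\bigr)
\]
valid for every $k\geq 1$: the base case $k=1$ is immediate, and the inductive step follows from computing $(r^k, k r^{k-1}m)(r,m) = (r^{k+1},\, r^k m + k r^k m) = (r^{k+1}, (k+1)r^k m)$, where commutativity of $R$ ensures that $r\cdot r^{k-1}=r^k$ acts unambiguously on $m$. This one formula drives everything.

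For part (2), suppose $(r,m)$ is nilpotent, so $(r,m)^k=(0,0)$ for some $k$; reading off the first coordinate of the displayed identity gives $r^k=0$, hence $r\in Nil(R)$. Conversely, if $r^s=0$, I would simply take $k=s+1$: then $r^k=0$ and $r^{k-1}=r^s=0$, so $(r,m)^{s+1}=(0,(s+1)r^s m)=(0,0)$, and $(r,m)$ is nilpotent. The point worth flagging is that the second coordinate requires no control over $m$ whatsoever, which is exactly why the nilpotence condition is purely a condition on $r$.

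For part (1), the forward direction carries the only real subtlety. Writing $(r,m)^2=(r^2,2rm)$ and equating with $(r,m)$ yields $r^2=r$ (so $r$ is idempotent) together with $2rm=m$; the converse is the one-line check that $(r,0)^2=(r^2,0)=(r,0)$ when $r^2=r$. The main obstacle I anticipate is deducing $m=0$ from $2rm=m$: since $2$ need not be cancellable in a general commutative ring, one cannot simply divide out the $2$, and the argument must instead be routed indirectly. The plan is to multiply $2rm=m$ by $r$ and use $r^2=r$ to obtain $2rm=rm$, whence $rm=0$, and then substitute back into $2rm=m$ to conclude $m=2rm=0$. Once this little manoeuvre is in place the whole lemma follows purely by coordinatewise computation.
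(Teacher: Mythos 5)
Your proof is correct and takes essentially the same approach as the paper: the paper's entire proof is the remark that (1) is obvious and (2) follows from the identity $(r,m)^n=(r^n,\,nr^{n-1}m)$, which is exactly the formula you establish by induction and then exploit. Your only genuine addition is spelling out the deduction of $m=0$ from $2rm=m$ (multiplying by $r$ to get $rm=0$ first), a detail the paper dismisses as obvious but which is worth recording since $2$ need not be cancellable in $R$.
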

\begin{proof}
  (1) is obvious and (2) follows from the fact that $(r,m)^n=(r^n, nr^{n-1}m)$, for any $r\in R$ and $m\in M$.
\end{proof}
\begin{Prop}\label{RM1}
Let $R$ be a commutative ring and $R(M)$, the idealization of $R$ and $R$-module $M$. Then an ideal $I$ of $R$ is weak nil clean ideal of $R$ \textit{if and only if} $I(N)$ is weak nil clean ideal of $R(M)$, for any submodule $N$ of $M$.
\end{Prop}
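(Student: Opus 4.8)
The plan is to reduce everything to Lemma \ref{RM}, which pins down exactly which elements of $R(M)$ are idempotent and which are nilpotent. The decisive features are these: an idempotent of $R(M)$ must have the form $(e,0)$ with $e\in Idem(R)$, while an element $(s,m)$ is nilpotent in $R(M)$ precisely when $s\in Nil(R)$, with no constraint on the module coordinate $m$. This second fact is what makes the correspondence work, since it allows the nilpotent summand to absorb the entire module component.

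For the forward direction, I would take an arbitrary element $(r,n)\in I(N)$, so that $r\in I$ and $n\in N$. Using that $I$ is weak nil clean in $R$, I would write $r=e+s$ or $r=-e+s$ with $e\in Idem(R)$ and $s\in Nil(R)$. In the first case I would propose the decomposition $(r,n)=(e,0)+(s,n)$, and in the second case $(r,n)=-(e,0)+(s,n)$. By Lemma \ref{RM}(1) the term $(e,0)$ is idempotent in $R(M)$, and by Lemma \ref{RM}(2) the term $(s,n)$ is nilpotent in $R(M)$; hence each is a genuine weak nil clean expression in $R(M)$, and $I(N)$ is weak nil clean for every submodule $N$.

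For the converse, I would fix any submodule $N$, take $r\in I$, and apply the hypothesis to $(r,0)\in I(N)$ (noting $0\in N$). This yields $(r,0)=\epsilon+\nu$ or $(r,0)=-\epsilon+\nu$ with $\epsilon\in Idem(R(M))$ and $\nu\in Nil(R(M))$. Writing $\epsilon=(e,0)$, which is forced by Lemma \ref{RM}(1), and $\nu=(s,m)$ with $s\in Nil(R)$ by Lemma \ref{RM}(2), I would read off the first coordinate to obtain $r=e+s$ or $r=-e+s$, a weak nil clean expression of $r$ in $R$; the second coordinate merely forces $m=0$ and is otherwise irrelevant. Thus $I$ is weak nil clean.

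The computations are entirely routine once Lemma \ref{RM} is in hand, so I do not expect a serious obstacle. The only points requiring care are to notice that the module coordinate can always be folded into the nilpotent part in the forward direction, and that it is automatically annihilated in the converse; I would therefore state explicitly that the second coordinate of the idempotent is forced to vanish, since that is the one structural constraint genuinely being used.
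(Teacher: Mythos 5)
Your proposal is correct and follows essentially the same route as the paper: both directions rest on Lemma \ref{RM}, with the forward direction absorbing the module coordinate into the nilpotent summand via $(x,m)=(\pm e,0)+(n,m)$, and the converse applying the hypothesis to $(r,0)$ and reading off the first coordinate. Your only addition is the explicit remark that the idempotent's module coordinate is forced to vanish, which the paper takes for granted.
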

\begin{proof}
  $(\Rightarrow)$ Let $I$ be weak nil clean ideal of $R$. Consider an Ideal $I(N)$ of $R(M)$, for some submodule $N$ of $M$. Let $(x,m)\in I(N)$. Then either $x=n+e$ or $x=-e+n$, for some $e\in Idem(R)$ and $n\in Nil(R)$. So either $(x,m)=(e,0)+(n,m)$ or $(x,m)=-(e,0)+(n,m)$, where $(e,0)\in Idem(R(M))$ and $(n,m)\in Nil(R(M))$ by Lemma \ref{RM}.\\
  $(\Leftarrow)$ Let $I(N)$ be a weak nil clean ideal of $R(M)$ and $r\in I$. For $(r,0)\in I(N)$, either $(r,0)=(e,0)+(n,0)$ or $(r,0)=-(e,0)+(n,0)$, for some $(e,0)\in Idem(R(M))$ and $(n,0)\in Nil(R(M))$. By Lemma \ref{RM}, we conclude that either $r=e+n$ or $r=-e+n$, where $e\in Idem(R)$ and $n\in Nil(R)$.
\end{proof}
In the following proposition we study about weak nil clean element of a corner ring.
\begin{Prop}
  Let $R$ be a ring and $f\in Idem(R)$. An element $a\in fRf$ is strongly weak nil clean element of $R$ \textit{if and only if} $a\in fRf$ is strongly weak nil clean element of $fRf$.
\end{Prop}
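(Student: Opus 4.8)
The plan is to prove the two implications separately; all the real content sits in the forward direction.

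For the easier implication ($\Leftarrow$), I would use the fact that the corner ring $fRf$ inherits the addition and multiplication of $R$, differing only in that its identity is $f$ rather than $1$. Idempotency and nilpotency do not refer to the identity, so an $e$ with $e^2=e$ computed in $fRf$ is still in $Idem(R)$, an $n$ with $n^k=0$ in $fRf$ is still in $Nil(R)$, and a commuting relation $en=ne$ in $fRf$ holds verbatim in $R$. Hence any strongly weak nil clean expression $a=e+n$ or $a=-e+n$ (with $en=ne$) that is valid in $fRf$ is already such an expression in $R$, and this direction needs no computation.

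For the forward implication ($\Rightarrow$), suppose $a\in fRf$ is strongly weak nil clean in $R$, say $a=e+n$ or $a=-e+n$ with $e\in Idem(R)$, $n\in Nil(R)$ and $en=ne$. The first step is the observation that $a\in fRf$ forces $fa=af=a$, so $f$ commutes with $a$. The key step, and what I expect to be the main obstacle, is to upgrade this to $f$ commuting with $e$ (and hence with $n$). I would obtain this from the standard fact that in a strongly (weak) nil clean decomposition the idempotent is an integer polynomial in the element: since $e$ and $n$ commute we get $ea=ae$, and $a-a^2$ (respectively $a+a^2$, applying the argument to $-a$) is nilpotent, so inside the commutative subring generated by $a$ and $e$ the idempotent congruent to $a$ modulo nilpotents exists as a polynomial $p(a)$ with integer coefficients and is unique — two commuting idempotents with nilpotent difference $d$ satisfy $d=d^3$, hence $d=0$. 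Thus $e=p(a)$, and because $f$ commutes with $a$ it commutes with $e$, and therefore with $n$ as well.

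With $fe=ef$ established, the rest is a short verification inside $fRf$. I would set $\tilde e:=fef=fe=ef$ and check $\tilde e\in Idem(fRf)$ (using $f^2=f$, $e^2=e$ and commutativity), and put $\tilde n:=fn$, which is nilpotent since $(fn)^k=f^kn^k=fn^k=0$ as $f$ and $n$ commute; both $\tilde e$ and $\tilde n=a-\tilde e$ lie in $fRf$. They commute because $f,e,n$ are pairwise commuting. Finally, multiplying the decomposition on the left by $f$ and using $fa=a$ gives $a=fe+fn=\tilde e+\tilde n$ in the $+e$ case and $a=-\tilde e+\tilde n$ in the $-e$ case, exhibiting $a$ as a strongly weak nil clean element of $fRf$. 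Everything after the polynomial-representation lemma is routine, so that lemma guaranteeing $fe=ef$ is where I would concentrate the argument.
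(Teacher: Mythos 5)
Your proof is correct, but the forward direction takes a genuinely different route from the paper's. The paper's entire proof is a two-line reduction: $fRf$ is a left ideal of $fR$, $fR$ is a right ideal of $R$, and the claim ``follows from Theorem \ref{111}'', i.e.\ from the monomial-expansion argument showing that the idempotent and nilpotent in a decomposition of an element of an ideal already lie in that ideal. Unpacked in this one-sided setting, that argument runs: from $a=\pm e+n$ with $en=ne$ (hence $an=na$) one has $e=\pm(a-n)$, so $e=e^k=(\pm 1)^k(a-n)^k$ where $n^k=0$; expanding $(a-n)^k$ by the binomial theorem, every term except $(-n)^k=0$ carries a factor of $a$, so $e\in aR\cap Ra$, and combined with $fa=af=a$ this forces $fe=e=ef$, hence $e\in fRf$ and $n=a\mp e\in fRf$. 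Thus the paper keeps the original pair $(e,n)$ and shows it lives in the corner ring, whereas you establish only the weaker fact $ef=fe$ --- via the commutative idempotent-lifting lemma giving $e=p(a)\in\mathbb{Z}[a]$ plus uniqueness of commuting idempotents with nilpotent difference --- and then pass to the truncated pair $(fe,fn)$. Both are sound. The paper's route is more elementary (binomial expansion only, no lifting lemma), yields the slightly stronger conclusion that the very same $e$ and $n$ witness the decomposition in $fRf$, and stays inside the paper's own toolkit; on the other hand, your route is fully rigorous as stated and makes explicit exactly where the ``strongly'' hypothesis enters, a point the paper glosses over, since Theorem \ref{111} is stated only for two-sided ideals and its extension along the chain $fRf\subseteq fR\subseteq R$ of one-sided ideals silently depends on the commutativity you isolate. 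If you want to streamline your argument, replace the lifting lemma by the binomial expansion of $(a-n)^k$: it gives $e\in aR\cap Ra$, hence $fe=ef$ (indeed $e\in fRf$) at no extra cost.
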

\begin{proof}
  As $fRf$ is a left ideal of $fR$ and $fR$ is a right ideal of $R$. Hence the result follows from Theorem \ref{111}.
\end{proof}
\begin{Cor}\label{cor1}
  If $R$ is strongly weak nil clean ring and $f\in R$ is any idempotent, then the corner ring $fRf$ is strongly weak nil clean.
\end{Cor}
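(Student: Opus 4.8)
The plan is to derive this as an immediate consequence of the Proposition stated just above. Recall that a ring is, by definition, strongly weak nil clean precisely when every one of its elements is a strongly weak nil clean element. Hence to prove the corollary it suffices to show that every $a\in fRf$ is a strongly weak nil clean element of the corner ring $fRf$ (where the idempotent, nilpotent, and commutativity conditions are all interpreted inside $fRf$, with $f$ serving as its unity).

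First I would observe that, since $R$ is strongly weak nil clean, every element of $R$ is a strongly weak nil clean element of $R$. In particular, because $fRf\subseteq R$, any fixed $a\in fRf$ is a strongly weak nil clean element of $R$. I would then invoke the preceding Proposition, which asserts that for $f\in Idem(R)$ an element $a\in fRf$ is a strongly weak nil clean element of $R$ \emph{if and only if} it is a strongly weak nil clean element of $fRf$. Applying the ``only if'' direction to our fixed $a$, we conclude that $a$ is a strongly weak nil clean element of $fRf$. As $a\in fRf$ was arbitrary, $fRf$ is strongly weak nil clean.

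There is no genuine obstacle here: all the substantive work has already been absorbed into the Proposition (which in turn relies on Theorem \ref{111}, guaranteeing that the idempotent and nilpotent parts of a decomposition actually lie in the relevant ideal). The corollary is therefore essentially a bookkeeping step that specializes the equivalence of the Proposition to the situation in which \emph{every} element of the ambient ring is known to admit a strongly weak nil clean decomposition. The only subtlety worth flagging explicitly is that the two notions of ``strongly weak nil clean element'' --- one relative to $R$ and one relative to $fRf$ --- coincide on $fRf$ exactly because of that Proposition, so no separate verification of the commuting idempotent--nilpotent decomposition inside $fRf$ is needed.
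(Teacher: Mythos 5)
Your proof is correct and matches the paper's intent exactly: the paper states this corollary without a separate proof precisely because it follows, as you argue, from applying the preceding Proposition to each element of $fRf$, all of which are strongly weak nil clean elements of $R$ by hypothesis. No gap here; your bookkeeping of the two notions of ``strongly weak nil clean element'' (relative to $R$ versus relative to $fRf$) is the whole content of the step.
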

A Morita context denoted by $(R,S,M,N,\psi,\phi)$ consists of two rings $R$ and $S$, two bimodules $_RN_S$ and $_SM_R$ and a pair of bimodule homomorphisms (called pairings) $\psi:N\otimes _SM\rightarrow R$ and $\phi:M\otimes _RN\rightarrow S$, which satisfy the following associativity: $\psi(n\otimes m)n'=n\phi(m\otimes n')$ and $\phi (m\otimes n)m'=m\psi(n\otimes m')$, for any $m,\,m'\in M$ and $n,\,n'\in N$. These conditions ensure that the set of matrices $\left(
                                                                 \begin{array}{cc}
                                                                   r & n \\
                                                                   m & s \\
                                                                 \end{array}
                                                               \right)$, where $r\in R$, $s\in S$, $m\in M$ and $n\in N$ forms a ring denoted by $T$, called the ring of the context. For any subset $I$ of $T$, define $p_R(I)=\{a\in R\,:\,\left(
                                                                 \begin{array}{cc}
                                                                   a & x\\
                                                                   y & b \\
                                                                 \end{array}
                                                               \right)\in I \}$, $p_M(I)=\{y\in M\,:\,\left(
                                                                 \begin{array}{cc}
                                                                   a & x\\
                                                                   y & b \\
                                                                 \end{array}
                                                               \right)\in I \}$, $p_S(I)=\{b\in S\,:\,\left(
                                                                 \begin{array}{cc}
                                                                   a & x\\
                                                                   y & b \\
                                                                 \end{array}
                                                               \right)\in I \}$ and $p_N(I)=\{x\in N\,:\,\left(
                                                                 \begin{array}{cc}
                                                                   a & x\\
                                                                   y & b \\
                                                                 \end{array}
                                                               \right)\in I \}$\par
A morita context $R= \left(
                                                                 \begin{array}{cc}
                                                                   A & M \\
                                                                   N & B \\
                                                                 \end{array}
                                                               \right)$ is called morita context of zero pairing if context products $MN=0$ and $NM=0$.
\begin{Lem}
  Let $T= \left(
                                                                 \begin{array}{cc}
                                                                   A & M \\
                                                                   N & B \\
                                                                 \end{array}
                                                               \right)$ be a morita context. Then $I$ is an ideal of $T$ \textit{if and only if} $I=\left(
                                                                 \begin{array}{cc}
                                                                   A_1 & M_1 \\
                                                                   N_1 & B_1 \\
                                                                 \end{array}
                                                               \right)$,
  where $A_1$ and $B_1$ are ideals of $A$ and $B$ respectively, $M_1$ and $N_1$ are submodules of $ _AM_B$ and $ _BN_A$ respectively, with $M_1N\subseteq A_1$, $N_1M\subseteq B_1$, $A_1M\subseteq M_1$, $B_1N\subseteq N_1$, $MN_1\subseteq A_1$, $NM_1\subseteq B_1$, $MB_1\subseteq M_1$ and $NA_1\subseteq N_1$. In this case $A_1=p_{A}(I)$, $B_1=p_B(I)$, $M_1=p_M(I)$ and $N_1=p_N(I)$.
\end{Lem}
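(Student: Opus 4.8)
The plan is to recover the four corner components of $I$ by means of the complementary idempotents $e=\begin{pmatrix}1_A & 0\\ 0 & 0\end{pmatrix}$ and $1-e=\begin{pmatrix}0&0\\0&1_B\end{pmatrix}$ of $T$, which are available precisely because $A$ and $B$ are unital. For the forward implication, suppose $I$ is an ideal of $T$ and take an arbitrary $x=\begin{pmatrix}a & m\\ n & b\end{pmatrix}\in I$. Since $I$ is two-sided, the four products $exe=\begin{pmatrix}a&0\\0&0\end{pmatrix}$, $ex(1-e)=\begin{pmatrix}0&m\\0&0\end{pmatrix}$, $(1-e)xe=\begin{pmatrix}0&0\\n&0\end{pmatrix}$ and $(1-e)x(1-e)=\begin{pmatrix}0&0\\0&b\end{pmatrix}$ all lie in $I$. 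This decoupling shows that a scalar $a$ occurs as a $(1,1)$-entry of some element of $I$ if and only if $\begin{pmatrix}a&0\\0&0\end{pmatrix}\in I$, and likewise in the other three slots; hence $I=\begin{pmatrix}A_1 & M_1\\ N_1 & B_1\end{pmatrix}$ with $A_1=p_A(I)$, $M_1=p_M(I)$, $N_1=p_N(I)$, $B_1=p_B(I)$, which already settles the final sentence of the lemma.

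Closure of $I$ under addition makes each of $A_1,B_1,M_1,N_1$ an additive subgroup, and the remaining structural conditions then fall out by multiplying these pure corner elements by pure elements of $T$ on the appropriate side. For example, left and right multiplication of $\begin{pmatrix}a&0\\0&0\end{pmatrix}$ ($a\in A_1$) by $\begin{pmatrix}a'&0\\0&0\end{pmatrix}$ shows $A_1$ is an ideal of $A$, and multiplying $\begin{pmatrix}0&m\\0&0\end{pmatrix}$ ($m\in M_1$) by the two diagonal corners shows $M_1$ is a sub-bimodule of ${}_AM_B$; similarly for $B_1$ and $N_1$. The eight cross-conditions come from the off-diagonal products, e.g. $\begin{pmatrix}a&0\\0&0\end{pmatrix}\begin{pmatrix}0&m\\0&0\end{pmatrix}=\begin{pmatrix}0&am\\0&0\end{pmatrix}$ gives $A_1M\subseteq M_1$, while $\begin{pmatrix}0&m\\0&0\end{pmatrix}\begin{pmatrix}0&0\\n&0\end{pmatrix}=\begin{pmatrix}mn&0\\0&0\end{pmatrix}$ gives $M_1N\subseteq A_1$. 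Running through all such pure products (on both sides) produces exactly $M_1N\subseteq A_1$, $N_1M\subseteq B_1$, $A_1M\subseteq M_1$, $B_1N\subseteq N_1$, $MN_1\subseteq A_1$, $NM_1\subseteq B_1$, $MB_1\subseteq M_1$ and $NA_1\subseteq N_1$.

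For the converse, assume $I=\begin{pmatrix}A_1 & M_1\\ N_1 & B_1\end{pmatrix}$ satisfies all the listed conditions. Closure under addition is componentwise and immediate, so it suffices to check closure under left and right multiplication by an arbitrary $\begin{pmatrix}a&m\\n&b\end{pmatrix}\in T$. Expanding $\begin{pmatrix}a&m\\n&b\end{pmatrix}\begin{pmatrix}a_1&m_1\\n_1&b_1\end{pmatrix}$ and reading off entries, each slot is a sum of two terms: the $(1,1)$-entry is $aa_1+mn_1\in A_1+MN_1\subseteq A_1$ and the $(1,2)$-entry is $am_1+mb_1\in AM_1+MB_1\subseteq M_1$, while the $(2,1)$- and $(2,2)$-entries land in $N_1$ and $B_1$ by the same mechanism. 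The right-multiplication computation is entirely symmetric, using the remaining four conditions. Hence $I$ is a two-sided ideal of $T$.

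The argument is essentially bookkeeping, and the only genuinely structural step is the decoupling in the first paragraph: it is what forces an arbitrary ideal to have the block form at all. After that, the main point to watch is the direction of the pairings—products of $M$ with $N$ land in $A$ whereas products of $N$ with $M$ land in $B$—so that each of the twelve containments (the four self-conditions and the eight cross-conditions) is matched with the correct entry of the matrix product. No deeper obstacle arises.
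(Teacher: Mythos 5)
Your argument is correct, but it cannot be compared step-by-step with the paper's, because the paper does not actually prove this lemma: its entire proof is the citation ``See Lemma $2.1$ \cite{Tang}.'' Your route --- cutting an arbitrary element of $I$ into its four corners using the idempotent $e=\left(\begin{smallmatrix}1_A&0\\0&0\end{smallmatrix}\right)$, identifying $A_1,M_1,N_1,B_1$ with the projections $p_A(I),p_M(I),p_N(I),p_B(I)$, verifying the ideal/submodule structure and the eight cross-containments by multiplying pure corner elements, and then checking the converse by expanding a general product entrywise --- is the standard Peirce-decomposition proof of such block-form statements, and is presumably the content of the cited lemma of Tang, Li and Zhou; the benefit of your version is that it is self-contained, which is arguably what this paper should have provided, since the lemma is the structural input for its subsequent Morita-context theorems. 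One point to tighten: the equality $I=\left(\begin{smallmatrix}A_1&M_1\\N_1&B_1\end{smallmatrix}\right)$ needs both inclusions, and the inclusion $\supseteq$ rests on additive closure --- a block matrix whose entries lie in the four projection sets is the sum of its four corner pieces, each of which lies in $I$ by your decoupling --- whereas you invoke additive closure only in the following paragraph; state it at the point where you conclude ``hence $I=\left(\begin{smallmatrix}A_1&M_1\\N_1&B_1\end{smallmatrix}\right)$,'' so that the forward implication is complete as written.
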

\begin{proof}
  See Lemma $2.1$ \cite{Tang}.
\end{proof}
\begin{Thm}
  Let $R= \left(
                                                                 \begin{array}{cc}
                                                                   A & M \\
                                                                   N & B \\
                                                                 \end{array}
                                                               \right)$ be a morita context and $I= \left(
                                                                 \begin{array}{cc}
                                                                   A_1 & M_1 \\
                                                                   N_1 & B_1 \\
                                                                 \end{array}
                                                               \right)$ be a strongly weak nil clean ideal of $R$. Then $A_1$ and $B_1$ are strongly weak nil clean ideals of $A$ and $B$ respectively.
\end{Thm}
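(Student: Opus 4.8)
The plan is to exhibit $A$ as a corner ring of $R$ and then push the strongly weak nil clean structure of $I$ down to that corner by means of the Proposition on corner rings proved above. I would begin by setting $g=\bigl(\begin{smallmatrix}1_A&0\\0&0\end{smallmatrix}\bigr)$, which is readily checked to be an idempotent of $R$, and computing $gRg=\bigl(\begin{smallmatrix}A&0\\0&0\end{smallmatrix}\bigr)$, a ring isomorphic to $A$ via $a\mapsto\bigl(\begin{smallmatrix}a&0\\0&0\end{smallmatrix}\bigr)$. By the structure lemma for ideals of a Morita context, $A_1=p_A(I)$ is already an ideal of $A$, so the whole task reduces to producing, for each $a\in A_1$, a commuting decomposition $a=\pm e'+n'$ with $e'\in Idem(A)$ and $n'\in Nil(A)$.

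Next I would fix $a\in A_1$ and consider $\tilde a=\bigl(\begin{smallmatrix}a&0\\0&0\end{smallmatrix}\bigr)$. Because $I=\bigl(\begin{smallmatrix}A_1&M_1\\N_1&B_1\end{smallmatrix}\bigr)$ contains every matrix with entries in the respective components and $0$ lies in $M_1$, $N_1$ and $B_1$, we have $\tilde a\in I$, so $\tilde a$ is a strongly weak nil clean element of $R$. Since $\tilde a\in gRg$, the Proposition on corner rings then forces $\tilde a$ to be a strongly weak nil clean element of the corner ring $gRg$ as well. Under the isomorphism $gRg\cong A$ the idempotents of $gRg$ are exactly the $\bigl(\begin{smallmatrix}e'&0\\0&0\end{smallmatrix}\bigr)$ with $e'\in Idem(A)$ and the nilpotents exactly the $\bigl(\begin{smallmatrix}n'&0\\0&0\end{smallmatrix}\bigr)$ with $n'\in Nil(A)$, so reading the $(1,1)$ entry of the decomposition of $\tilde a$ inside $gRg$, together with the commutativity relation, returns the required decomposition of $a$ in $A$. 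Thus $A_1$ is a strongly weak nil clean ideal of $A$.

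The subtle point, and the reason the corner-ring Proposition is indispensable, is that one cannot simply take $(1,1)$ entries of a raw decomposition $\tilde a=\pm E+\mathcal N$ carried out in $R$: in a general Morita context the $(1,1)$ entry of $E^2$ picks up an extra cross term $pq$ with $p\in M$, $q\in N$ coming from the pairing $M\otimes N\to A$, so the corner of an idempotent matrix need not be idempotent. Restricting to $gRg$ before decomposing is precisely what kills these off-diagonal contributions, since there the idempotent and nilpotent parts live in the corner and have zero off-diagonal entries. Finally, I would run the mirror-image argument with the complementary idempotent $h=\bigl(\begin{smallmatrix}0&0\\0&1_B\end{smallmatrix}\bigr)$, noting $hRh\cong B$ and that $B_1=p_B(I)$ is an ideal of $B$, to conclude that $B_1$ is a strongly weak nil clean ideal of $B$.
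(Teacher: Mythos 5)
Your proposal is correct and is essentially the paper's own argument written out in full: the paper's one-line proof just cites the corner-ring transfer result (the Proposition preceding Corollary \ref{cor1}, together with Theorem \ref{111}), which is exactly your step of identifying $A$ with $gRg$ for $g=\left(\begin{smallmatrix}1_A&0\\0&0\end{smallmatrix}\right)$, noting $\tilde a\in I\cap gRg$ is strongly weak nil clean in $R$, and pushing the decomposition down to the corner (and symmetrically for $B$). Your added remark that in a Morita context with nonzero pairings the $(1,1)$ entry of an idempotent of $R$ need not be idempotent, so that naive entry-reading fails and the corner-ring Proposition is genuinely needed, is a correct justification that the paper leaves implicit.
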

\begin{proof}
  The proof follows from Corollary \ref{cor1} and Theorem \ref{111}.
\end{proof}

\begin{Thm}
  Let $R= \left(
                                                                 \begin{array}{cc}
                                                                   A & M \\
                                                                   N & B \\
                                                                 \end{array}
                                                               \right)$ be a morita context of zero pairing. If $A_1$ and $B_1$ are weak nil clean ideals of $A$ and $B$ respectively, where at least one of them is strongly nil clean ideal then $I= \left(
                                                                 \begin{array}{cc}
                                                                   A_1 & M_1 \\
                                                                   N_1 & B_1 \\
                                                                 \end{array}
                                                               \right)$ is a weak nil clean ideal of $R$.
\end{Thm}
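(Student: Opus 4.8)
The plan is to push the whole problem onto the two corner rings $A$ and $B$, using zero pairing to tame the off-diagonal part. First I would note that $K=\begin{pmatrix} 0 & M \\ N & 0\end{pmatrix}$ is a two-sided ideal of $R$ with $K^2=0$ (this is exactly what $MN=0$ and $NM=0$ give), so $K$ is nil and $R/K\cong A\times B$ as rings. Hence a matrix $\nu=\begin{pmatrix} p & u \\ v & q\end{pmatrix}$ is nilpotent in $R$ precisely when its image $(p,q)$ is nilpotent in $A\times B$, i.e. when $p\in Nil(A)$ and $q\in Nil(B)$: if $\nu^k\in K$ then $\nu^{2k}\in K^2=0$. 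Moreover any diagonal matrix $\begin{pmatrix} e & 0 \\ 0 & f\end{pmatrix}$ with $e\in Idem(A)$ and $f\in Idem(B)$ is an idempotent of $R$.

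The one delicate point is the sign, so next I would record that a nil clean ideal can produce either sign. If $L$ is a nil clean ideal of a ring and $a\in L$, then $-a\in L$ admits a nil clean expression $-a=e+n$ with $e\in Idem(R)$, $n\in Nil(R)$, and therefore $a=-e+(-n)$ is an expression of the minus type (with $e,n$ in fact lying in $L$ by the nil clean analogue of Theorem \ref{111}, see \cite{Ajay1}). Thus for whichever of $A_1,B_1$ is nil clean, both the plus and the minus decompositions of any of its elements are available. Note that I only use that one of the two ideals is nil clean; this is guaranteed because strongly nil clean implies nil clean.

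With this preparation the argument is immediate. Assume without loss of generality that $A_1$ is nil clean and $B_1$ is weak nil clean, and take $x=\begin{pmatrix} a & m \\ n & b\end{pmatrix}\in I$. The weak nil clean property of $B_1$ writes $b=f+n_b$ or $b=-f+n_b$ with $f\in Idem(B_1)$, $n_b\in Nil(B_1)$. I choose the decomposition of $a$ in $A_1$ with the matching sign, $a=\pm e+n_a$, using the previous paragraph to realise the minus case. Then
\[
x=\pm\begin{pmatrix} e & 0 \\ 0 & f\end{pmatrix}+\begin{pmatrix} n_a & m \\ n & n_b\end{pmatrix},
\]
where the first summand is idempotent and the second is nilpotent by the first paragraph, since its diagonal entries $n_a\in Nil(A)$ and $n_b\in Nil(B)$ are nilpotent. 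This is a weak nil clean expression of $x$, proving $I$ weak nil clean. The main obstacle is precisely the sign-matching: a priori the minus sign forced on $b$ by $B_1$ need not be achievable for $a$, and it is the passage to $-a$ in the second paragraph that removes this obstruction, after which the nilpotency of the off-diagonal summand follows routinely from $K^2=0$.
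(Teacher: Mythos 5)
Your proof is correct, and its outer skeleton (decompose $b$ in $B_1$, match the sign when decomposing $a$ in $A_1$, then assemble a diagonal idempotent plus a nilpotent matrix carrying the off-diagonal entries) is the same as the paper's; the difference lies in how the two crucial steps are justified, and your route is both more self-contained and stronger. For the nilpotency of $\left(\begin{smallmatrix} n_a & m \\ n & n_b \end{smallmatrix}\right)$ the paper appeals to Theorem $2.28$ of \cite{Ajay1}, an external result on Morita contexts, and that reliance is apparently why the statement carries the word ``strongly''; indeed the paper's proof even quietly strengthens the hypothesis on $B_1$ to \emph{strongly} weak nil clean, which the theorem as stated does not grant. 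Your argument instead extracts nilpotency directly from the zero pairing: $K=\left(\begin{smallmatrix} 0 & M \\ N & 0 \end{smallmatrix}\right)$ is an ideal with $K^2=0$ and $R/K\cong A\times B$, so any matrix of $R$ whose diagonal entries are nilpotent is itself nilpotent. Since no commutation between idempotent and nilpotent parts is ever used, you in fact prove the sharper statement in which ``strongly nil clean'' is weakened to ``nil clean'' (and no strong hypothesis is placed on the other ideal). You also supply the justification for the sign-matching step that the paper glosses over: the paper simply writes ``set $a=-f+p$'', but a nil clean ideal a priori yields only plus-type decompositions, and your device of applying nil cleanness to $-a\in A_1$ is exactly what legitimizes that line.
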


\begin{proof}
  Let $A_1$ be strongly nil clean ideal of $A$ and $B_1$ be strongly weak nil clean ideal of $B$ respectively. Let $x= \left(
                                                                 \begin{array}{cc}
                                                                   a & m \\
                                                                   n & b \\
                                                                 \end{array}
                                                               \right)\in I$.
 Then there exist $e\in Idem(B_1)$ and $q\in Nil(B_1)$ such that either $b=e+q$ or $b=-e+q$. If $b=e+q$, then set $a=f+p$, where $f\in Idem(A_1)$ and $p\in Nil(A_1)$ and we get, $x= \left(
                                                                 \begin{array}{cc}
                                                                   f & 0 \\
                                                                   0 & e \\
                                                                 \end{array}
                                                               \right)+ \left(
                                                                 \begin{array}{cc}
                                                                   p & m \\
                                                                   n & q \\
                                                                 \end{array}
                                                               \right)$
  and  $p^k=0$ and $q^k=0$, for some $k\in \mathbb{N}$, where $\left(
                                                                 \begin{array}{cc}
                                                                   f & 0 \\
                                                                   0 & e \\
                                                                 \end{array}
                                                               \right)\in Idem(R)$.
   Now from Theorem $2.28$ \cite{Ajay1}, we conclude that $\left(
                                                                 \begin{array}{cc}
                                                                   p & m \\
                                                                   n & q \\
                                                                 \end{array}
                                                               \right)\in Nil(R)$.
   Also if $b=-e+q$ then set $a=-f+p$ and similar as above we can show that $x$ is weak nil clean element of $R$.
\end{proof}

\end{document}